\newtheorem{proposition}{Proposition}
\newtheorem{lemma}{Lemma}
\newtheorem{corollary}{Corollary}
\newcolumntype{C}{>{\centering\arraybackslash}X}
\newcolumntype{L}{>{\raggedright\arraybackslash}X}
\newcolumntype{Z}{>{\hsize=1\hsize\raggedright}X}
\newcolumntype{Y}{>{\centering\small\arraybackslash}X}
\title{A Spatial Branch-and-Bound Approach for Maximization of Non-Factorable Monotone Continuous Submodular Functions
}
\author{
  Hugh Medal, Izuwa Ahanor \\
  Department of Industrial and Systems Engineering, \\
  University of Tennessee-Knoxville\\
  \texttt{hmedal@utk.edu,iahanor@vols.utk.edu} \\
}
\begin{document}
\maketitle

\onehalfspacing

\begin{abstract}
In contrast to the many continuous global optimization methods that assume the objective function and constraints are factorable, we study how to find globally maximal solutions to problems that are not factorable, focusing on a particular class of problems. Specifically, we develop a method for non-decreasing continuous submodular functions subject to constraints. We characterize the hypograph of such functions and develop a cutting plane algorithm that finds approximate solutions and bounds using an approximation of the convex hull of the hypograph. We also test a spatial branch-and-bound approach that utilizes the approximate cutting plane algorithm to form an outer approximation and obtain upper bounds for a sub-rectangle and compare our method with a state-of-the-art commercial solver. The main result is that for some problems the property of submodularity is more useful than factorability.
\end{abstract}

\keywords{non-convex optimization \and continuous optimization \and submodularity \and cutting planes \and hypograph \and spatial branch and bound \and global optimization \and nonlinear programming \and concave envelopes}

\section{Introduction}
\label{sec:1introduction}

Consider a continuous function $F:\mathcal{X} \rightarrow \mathbb{R}$ with $\mathcal{X} = \prod_{i=1}^{n}\mathcal{X}_{i}$, where $\mathcal{X}_i$ is a compact subset of $\mathbb{R}$. Many approaches for finding a global maximum/minimum of a non-convex continuous function $F$ require $F$ to be \textit{factorable}. This property allows a nonlinear expression to be decomposed into atomic units (e.g., $x$, $\sin(x)$, etc.) and operators $(\times,\div,+,-)$ linked together using an expression tree. For example, the function $F(x_1,x_2)=\sin(x_1)\,x_2$ can be factored into the atomic units $\sin(x_1)$ and $x_2$ that are linked by the multiplication operator. Then the atomic units as well as product and quotient terms can be approximated above (below) using concave (convex) envelopes. These envelopes can be used to efficiently compute an upper (lower) bound that can be used within a spatial branch-and-bound approach.

In contrast, the method that we describe in this paper does not assume that $F$ is factorable. That is, instead of relying on a factorable expression to derive envelopes, we present an approach for using \textit{submodularity} to derive them.

Toward this end we assume that $F$ is diminishing-returns (DR) submodular (i.e., submodular and component-wise concave), non-decreasing, and differentiable.

The goal of this paper is develop a method for globally maximizing $F$ subject to constraints. In other words, to find globally optimal solutions to:

\begin{equation}
	\label{opt-problem}
	\max_{x\in X} F(x),
\end{equation}

where $X\subseteq \mathcal{X}$ is defined by upper and lower bounds for each variable as well as a linear system $Ax\leq b$.

\subsection{Motivation}
There are a number of important applications that involve maximizing a continuous non-decreasing DR-submodular function. One example is when $F$ is a non-concave quadratic function of the form $F(x) = c^T x + \frac{1}{2}x^T H x + h$, where $H$ is symmetric. If the entries of $H$ are non-positive then $F$ is DR-submodular.

Another example is the \textit{generalized multilinear extension (GME)} of a submodular set function. Let $f:2^{V} \rightarrow \mathbb{R}_{+}$ be a set function over the ground set $V$. A GME can model problems involving \textit{decision-dependent uncertainty} in which resources are allocated to increase/decrease the probability that an element of the ground is set present. The function $f$ computes the utility of subsets of $V$ consisting of available elements.

Let $x\in \mathbb{R}^n$ be a vector of decision variables and let $g_i(x)$ be the probability that element $i\in V$ is available. Then the GME computes the expected utility as a function of $x$:

\begin{equation}
	\label{gme}
	F(x) = \mathbb{E}_{S\sim g(x)}[f(S)] = \sum_{S\subseteq V} f(S) \prod_{i\in S} g_i(x) \prod_{i\notin S} (1-g_i(x)),
\end{equation}

The case in which $g_i(x) = x_i$ is known as the \textit{multilinear extension}. In this case if $f$ is submodular then $F$ is a continuous DR-submodular function. For this special case $F$ has been used to obtain randomized algorithms for maximizing $f$ with a constant-factor approximation guarantee \citep{Calinescu.2011}.

Another special case is when allocating resources to element $i$ only affects the probability that $i$ itself is available, i.e., $g_i(x) = g_i(x_i)$. One example of this special case is the problem of allocating resources to facilities to increase the probability that facilities are available, with the ultimate goal of maximizing the expected utility. If $f$ is submodular and $g_i(x_i)$ is concave for all $i\in V$, then $F$ is DR-submodular \citep{Bian.2017}. See Sections \ref{subsec:fac-defense} and \ref{subsec:cap-fac-defense} for computational experiments with this application.


\subsection{Related Work}
Deterministic global optimization is a well-developed field of research in which the goal is to find a global optimum of a function without assuming global structure such as convexity. The different approaches for this class of problems can generally classified according to assumed problem structure.

Spatial-branch-and-bound approaches typically seek to form convex/concave underestimators/overestimators and then uses these to obtain lower/upper bounds within a branch-and-bound scheme. Estimators can be obtained for a factorable term by decomposing it into atomic units (e.g., $x$, $\sin(x)$, etc.) and operators $(\times,\div,+,-)$ linked together using an expression tree. Then the atomic units as well as product and quotient terms can be approximated above (below) using concave (convex) envelopes. For example, \cite{McCormick.1976} characterized the envelopes for the product of two functions, \cite{Meyer.2004v2} the envelopes for trilinear terms, and \cite{Rikun.1997} for multilinear terms. Because these approaches assume that an algebraic and factorable representation of the objective function and constraints is available, they can viewed as ``white-box'' optimization.

In stark contrast to methods that assume factorability, another area of research on \textit{derivative-free} or \textit{black-box} optimization uses only function evaluations to estimate the coefficients of a surrogate model without assuming any structure. Because these methods are designed for problems for which function evaluations are expensive, they typically do not use derivatives. Some approaches use interpolation (see \cite{Conn.2008}), while others use statistical models such as Gaussian processes \citep{Frazier.2018} or neural networks \citep{Snoek.2015}.

In between methods that assume factorability and derivative-free optimization is a small group of ``grey-box'' methods that do not require the function to be factorable but still use derivative/Hessian information. For example, the well-known $\alpha$-BB method (see \cite{Floudas.2000}) requires the function to be twice-differentiable and requires accurate estimation of a parameter $\alpha$ based on bounds of the eigenvalues of the Hessian. \cite{Meyer.2002} developed a method for problems with non-factorable constraints that uses a surrogate model and interpolation.

However, we are not aware of any ``grey-box'' continuous global optimization approaches that exploit submodularity. This paper investigates whether submodularity can be used to accelerate ``grey-box'' global optimization.

Submodularity is a much celebrated property in discrete optimization, partly because this property yields a constant factor approximation for many submodular maximization problems. For example, it is well known that non-decreasing set functions subject to matroid constraints can be approximated with guarantee $1-1/e$ \citep{Nemhauser.1978}. Further, it is well-known that submodular set functions can be minimized (without constraints) in polynomial time (see \cite{McCormick.2005}).

Several papers have used the properties of the hypograph of submodular set functions to obtain globally maximal solutions. \cite{Nemhauser.1981} presented an exact cutting plane algorithm for maximizing submodular set functions based on properties of their hypograph. \cite{Wu2018} used a version of this cutting plane algorithm to solve the discrete influence maximization problem, showing positive computational results. \cite{Ahmed.2011} study the convex hull of the hypograph of a special case of a submodular function consisting of a concave function composed with an additive function and present two new classes of valid inequalities that are stronger than the classic representation of \cite{Nemhauser.1981}. Others have used hypograph cuts as valid inequalities within a branch-and-cut algorithm for special classes of problems including variants of the facility location problem \citep{Ljubic.2018}. \cite{Yu.2021} present a cutting plane method for $k$-submodular function maximization.

General constrained submodular minimization is NP-hard \citep{Svitkina.2011}. \cite{Yu.2017j9x} study the convex hull of the epigraph set of a concave function composed with an additive function and provide a full description for the case in which the costs in the additive function are identical. They also provide a class of facet-defining inequalities for the non-identical. \cite{Yu.2020mvp} present a complete linear description of the convex hull of the epigraph of a bisubmodular function. \cite{Atamturk.2021} present inequalities that form an outer polyhedral approximation for the epigraph of submodular functions. \cite{Lee.2015} also contribute a cutting plane algorithm for continuous submodular minimization.

For the continuous case, is well known that submodular functions can be minimized in polynomial time \citep{Bach.2019}, Indeed, \cite{Bach.2019} showed that many of the results relating submodularity and convexity for set functions translate to continuous ones, focusing mostly on properties that are useful for minimization. 

Regarding continuous submodular maximization, it is known that continuous submodular functions can maximized with optimal guarantee $1-1/e$ in the non-decreasing case \citep{Bian.2017} and $1/2$ in the non-monotone case \citep{Niazadeh.2018}.

However, less is known about \textit{exact} submodular maximization in the continuous case. This paper focuses on addressing this gap in knowledge.

\subsection{Contributions and Findings}
Our specific contributions are the following:
\begin{enumerate}
    \item A characterization of the hypograph of continuous monotone DR-submodular functions via non-convex inequalities.
    
    \item A characterization of an approximate convex hull of the non-convex hypograph.
    
    
    \item Computational results for a spatial branch-and-bound approach that uses the approximate convex hull of the hypograph to compute upper and lower bounds in each sub-rectangle.
    
\end{enumerate}

\subsection{Outline of the paper}
In the remainder of this article \S\ref{sec:background} provides technical details about submodular functions needed for the rest of the article as well as and overview of notation used. Section \ref{sec:characterize-hypograph} describes a set of inequalities that define the hypograph of a continuous non-decreasing DR-submodular function, and \S\ref{sec:exact-cutting-plane} describes a cutting plane procedure based on these inequalities. Because this exact cutting plane procedure is computationally intractable, \S\ref{sec:approx-hypograph} describes a set of inequalities that form the approximate convex hull of the hypograph of a continuous non-decreasing DR-submodular function, and \S\ref{sec:approx-cutting-plane-alg} describes a cutting plane procedure based on these inequalities. Section \ref{sec:spatial-bb-implementation} uses this approximate cutting plane algorithm as a subroutine within a spatial branch-and-bound algorithm, and \S\ref{sec:results} presents results of a computational study with this algorithm. Finally, Section \ref{sec:conclusion} concludes the article.

\section{Background and Notation} \label{sec:background}
In this section we provide several definitions and results that are needed for the remainder of the paper.

First, given a ground set $V$, a set function $f:2^{V} \rightarrow \mathbb{R}$ is \textit{submodular} iff:
\begin{equation}
    \label{submod-def}
    f(S) + f(T) \geq f(S \cup T)  + f(S \cap T) \quad \forall S, T \subseteq V
\end{equation}

Let $\rho_k(S) = f(S \cup \{k\}) - f(S)$ denote the marginal gain in $f$ from adding $k$ to the set $S$. A set function $f$ is non-decreasing if $\rho_k(S)\geq 0$ for all $S\subseteq V$. Using \eqref{submod-def}, it can be shown that $f$ is submodular if and only if the marginal gains are decreasing, i.e., 

\begin{equation}
    \rho_k(S) \geq \rho_k(T) \quad \forall S \subseteq T \subseteq V, k \in V \backslash T.
\end{equation}

\cite{Nemhauser.1978} showed that \eqref{submod-def} implies the following first-order overestimator property:

\begin{equation}
    \label{eqn:submod-set-overestimator}
    f(T) \leq f(S) + \sum_{j\in T\setminus S} \rho_j(S) - \sum_{j\in S\setminus T} \rho_j(S\cup T \setminus \{j\}) \quad \forall S,\,T\subseteq V.
\end{equation}

And when $f$ is non-decreasing they showed that \eqref{eqn:submod-set-overestimator} reduces to:

\begin{equation}
    \label{eqn:submod-set-overestimator-nondecr}
    f(T) \leq f(S) + \sum_{j\in T\setminus S} \rho_j(S)\quad \forall S,\,T\subseteq V.
\end{equation}

\cite{Nemhauser.1981} showed that \eqref{eqn:submod-set-overestimator-nondecr} define the hypograph of a non-decreasing submodular function and demonstrated how to use this result within a hypograph-based cutting plane algorithm. \cite{Wu2018} presented computational results for this cutting plane algorithm on an influence maximization problem.
				
\textit{Continuous} submodular functions are defined on subsets of the form $\mathbb{R}^{n}:\mathcal{X} = \prod_{i=1}^{n}\mathcal{X}_i$ where $\mathcal{X}_{i}$ is a compact subset of $\mathbb{R}$. Analogous to \eqref{submod-def}, a function $F:\mathcal{X} \rightarrow \mathbb{R}$ is submodular iff: 

\begin{equation} \label{eqn:cont-submod-defn}
    F(x) + F(y) \geq F(\max\{x,y\}) + F(\min \{x,y\}) \quad \forall (x,y) \in \mathcal{X} \times \mathcal{X}.
\end{equation}

Letting $e_{i}$ denote the $i$th basis vector, the function $\rho_{i}(\delta\,|\,x)=F(x+\delta e_{i}) - F(x)$ computes the marginal gain in $f$ from adding $\delta$ to component $x_i$. Thus, a continuous function $f$ is non-decreasing if $\rho_{i}(\delta\,|\,x)\geq 0$ for all $\delta \geq 0$ and $x\in \mathcal{X}$. Also, another necessary and sufficient for the submodularity of $F$ is 
\begin{equation}
    \rho_{i}(\delta\,|\,x) \geq \rho_{i}(\delta\,|\,x + \delta' e_{j}), \quad \forall x \in \mathcal{X}; \delta,\delta'\in \mathbb{R}^+.
\end{equation}

Hereafter we assume that $F$ is DR-submodular, meaning that it is submodular and also component-wise concave \citep{Bian.2017}.

In this paper we use the following notational conventions. First, $[x]^+$, denotes the rectified linear unit function $\max\{x,0\}$ applied to the vector $x$ elementwise so that the $i$th element is $\max\{x_i,0\}$. The notation $[n]$ defines the set $\{1,\dots,n\}$. Second, when $x$ and $y$ are vectors, then $x\odot y$ and $x/y$ denote elementwise addition and division, respectively. Finally, when $\ell$ and $u$ are scalars the notation $[\ell,u]$ denotes the interval from $\ell$ to $u$. However, when $\ell$ and $u$ are vectors then $[\ell,u]$ denotes the rectangle $\{x\,:\, \ell_i \leq x_i \leq u_i \, \forall i\in [n]\}$, where $n$ is the dimension of the space.

\section{Characterization of the hypograph of \normalfont \emph{F}}\label{sec:characterize-hypograph}
Just as \cite{Nemhauser.1981} showed that the first-order overestimators \eqref{eqn:submod-set-overestimator-nondecr} define the hypograph of non-decreasing submodular set functions, in this section we seek to develop an equivalent result for the continuous case.

\subsection{Results for general continuous \normalfont \emph{F}}
First, we show that an analog to the first-order estimator for submodular set functions \eqref{eqn:submod-set-overestimator} exists for continuous functions.

\begin{lemma} \thlabel{res:inequality-general}
For a continuous submodular function $F$, the following holds for all $x,y \in \mathcal{X}$:
\begin{equation}
       F(y) \leq F(x) + \sum_{t=1}^n \rho([y_t-x_t]^+\,|\, x) - \sum_{t=1}^n \rho([x_t-y_t]^+ e_t\,|\, \max\{x,y\} - \max\{x_t-y_t\} e_t ) \label{eqn:first-order-overestimator-cont}
\end{equation}
\end{lemma}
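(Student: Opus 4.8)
The plan is to mimic the telescoping argument that yields the discrete first-order overestimator \eqref{eqn:submod-set-overestimator}, replacing set operations by componentwise operations and the decreasing-marginals characterization of set submodularity by its continuous analog. Writing $z = \max\{x,y\}$ for the componentwise join, I would split the quantity to be bounded using the trivial identity
\[
F(y) - F(x) = \bigl[F(z) - F(x)\bigr] - \bigl[F(z) - F(y)\bigr].
\]
I would then bound the first bracket from above and the second from below, and arrange matters so that the two resulting sums coincide term by term with the two sums in \eqref{eqn:first-order-overestimator-cont}.

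For the first bracket, I would introduce the path $x^{(0)} = x$ and $x^{(k)} = x + \sum_{t=1}^k [y_t - x_t]^+ e_t$, observing that $x^{(n)} = z$ because $x_t + [y_t - x_t]^+ = \max\{x_t,y_t\}$. Telescoping gives $F(z) - F(x) = \sum_{k=1}^n \rho_k([y_k-x_k]^+ \mid x^{(k-1)})$. Since $x^{(k-1)}$ exceeds $x$ only in coordinates $1,\dots,k-1$, all different from $k$, repeated application of the decreasing-marginals property $\rho_i(\delta\mid x) \geq \rho_i(\delta \mid x + \delta' e_j)$ along the points $x^{(0)},\dots,x^{(k-1)}$ shows $\rho_k([y_k-x_k]^+\mid x^{(k-1)}) \leq \rho_k([y_k-x_k]^+\mid x)$, yielding the upper bound $\sum_t \rho([y_t-x_t]^+\mid x)$.

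For the second bracket, I would use the analogous path $y^{(k)} = y + \sum_{t=1}^k [x_t - y_t]^+ e_t$, so that $y^{(n)} = z$ and $F(z) - F(y) = \sum_{k=1}^n \rho_k([x_k-y_k]^+\mid y^{(k-1)})$. The key observation is that the base point $z - [x_k - y_k]^+ e_k$ appearing in \eqref{eqn:first-order-overestimator-cont} agrees with $y^{(k-1)}$ in coordinate $k$ (both equal $y_k$) but dominates it in coordinates $k+1,\dots,n$ (where $z_t \geq y_t$). Hence, again by repeated use of the decreasing-marginals property in coordinates different from $k$, $\rho_k([x_k-y_k]^+\mid y^{(k-1)}) \geq \rho_k([x_k-y_k]^+\mid z - [x_k-y_k]^+ e_k)$, giving the lower bound $\sum_t \rho([x_t-y_t]^+ e_t \mid z - [x_t-y_t]^+ e_t)$. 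Substituting both bounds into the displayed identity produces \eqref{eqn:first-order-overestimator-cont}, since upper-bounding the first bracket and lower-bounding the subtracted second bracket both move the right-hand side upward.

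The main obstacle is bookkeeping rather than a deep idea: I must verify that every intermediate point of both paths lies in $\mathcal{X}$ (each coordinate equals some $x_t$ or $y_t$, hence lies in $\mathcal{X}_t$), and I must apply the single-coordinate decreasing-marginals inequality iteratively while checking that the coordinate being perturbed is always distinct from the coordinate $k$ whose marginal is under comparison, so that the inequalities point in the intended direction. The one spot requiring a genuine (if small) verification is that the base point $z - [x_k - y_k]^+ e_k$ has $k$-th coordinate exactly $y_k$; this needs the two cases $x_k \geq y_k$ and $x_k < y_k$ to be checked separately so that the second sum matches \eqref{eqn:first-order-overestimator-cont} precisely.
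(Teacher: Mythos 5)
Your proposal is correct and follows essentially the same route as the paper's proof: the identical decomposition $F(y)-F(x)=\bigl[F(\max\{x,y\})-F(x)\bigr]-\bigl[F(\max\{x,y\})-F(y)\bigr]$, the same coordinate-wise telescoping paths, and the same repeated application of the decreasing-marginals form of submodularity to bound the first bracket above and the second below. Your version is in fact more careful on the bookkeeping the paper leaves implicit (e.g., verifying that the $k$-th coordinate of $\max\{x,y\}-[x_k-y_k]^+e_k$ equals $y_k$, and that each marginal comparison perturbs only coordinates distinct from $k$).
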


\begin{proof}
We employ a strategy that is similar to the one used by \cite{Nemhauser.1978} for the set function case. Let $z=\max(y-x,0)$ and $w=\max(x-y,0)$. Then,

\begin{equation} \label{lastinequality-1}
\begin{aligned} 
F(\max\{x,y\}) - F(x) &=  \sum_{t=1}^n \left[ F\left(x + \sum_{i=1}^t z_i e_i \right) - F\left(x + \sum_{i=1}^{t-1} z_i e_i \right) \right] \\ 
&=  F(x+z_1 e_1) - F(x) + F(x+z_1 e_1 + z_2 e_2) - F(x+z_1 e_1) + \dots \\
&= \sum_{t=1}^n \rho\left(z_t\,\bigg\rvert\, x + \sum_{i=1}^t z_i e_i\right) \\
&\leq \sum_{t=1}^n \rho(z_t\,|\, x), 
\end{aligned}
\end{equation}

where the last inequality is due to the submodularity of $F$.

\begin{equation} \label{lastinequality-2}
\begin{aligned} 
F(\max\{x,y\}) - F(y) &=  \sum_{t=1}^n \left[ F\left(y + \sum_{i=1}^t w_i e_i\right) - F\left(y + \sum_{i=1}^{t-1} w_i e_i \right) \right] \\ 
&=  \sum_{t=1}^n \rho\left(w_t e_t\,\bigg\rvert\, y + \sum_{i=1}^t w_i e_i  - w_t e_t\right)\\
&\geq \sum_{t=1}^n \rho(w_t e_t\,|\, \max\{y,x\} - w_t e_t),
\end{aligned}
\end{equation}

Using \eqref{eqn:cont-submod-defn}, the result is obtained by subtracting \eqref{lastinequality-2} from \eqref{lastinequality-1}.

\end{proof}

Next, we show that when $F$ is non-decreasing, \eqref{eqn:first-order-overestimator-cont} simplifies to the following first-order estimator. This estimator is analogous to the result for submodular set functions \eqref{eqn:submod-set-overestimator-nondecr}.

\begin{proposition} \thlabel{res:inequality-non-decreasing}
For a non-decreasing continuous DR-submodular function $F$, the following holds:
   \begin{equation} \label{eqn:first-order-overestimator-cont-nondecr}
       F(x) \leq F(\hat{x}) + \sum_{t=1}^n \rho([x_t-\hat{x}_t]^+\,|\, \hat{x}) \quad \forall x, \hat{x} \in \mathcal{X}
   \end{equation}
\end{proposition}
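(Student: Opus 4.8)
The plan is to derive the simplified estimator \eqref{eqn:first-order-overestimator-cont-nondecr} directly from the general estimator of \thref{res:inequality-general}, by showing that the non-decreasing hypothesis forces the final subtracted sum in \eqref{eqn:first-order-overestimator-cont} to be nonnegative, so that it may be discarded while preserving the inequality.

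First I would instantiate \thref{res:inequality-general} with the roles of its two points exchanged, setting its ``$y$'' equal to the present $x$ and its ``$x$'' equal to the present $\hat{x}$. This substitution turns \eqref{eqn:first-order-overestimator-cont} into
\begin{equation*}
F(x) \leq F(\hat{x}) + \sum_{t=1}^n \rho([x_t-\hat{x}_t]^+\,|\, \hat{x}) - \sum_{t=1}^n \rho\!\left([\hat{x}_t-x_t]^+ e_t\,\big|\, \max\{\hat{x},x\} - [\hat{x}_t-x_t]^+ e_t\right),
\end{equation*}
which is exactly the target inequality \eqref{eqn:first-order-overestimator-cont-nondecr} augmented by the final subtracted sum.

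The key step is to argue that this subtracted sum is nonnegative. Each of its summands is a marginal gain $\rho(\delta e_t \mid z)=F(z+\delta e_t)-F(z)$ evaluated at the increment $\delta = [\hat{x}_t-x_t]^+$, which is nonnegative by construction of the rectified-linear operator. Since $F$ is non-decreasing, every such marginal gain with a nonnegative increment is itself nonnegative, so the entire sum is $\geq 0$. Subtracting a nonnegative quantity only tightens the right-hand side; hence removing that sum yields a valid (weaker) upper bound, which is precisely \eqref{eqn:first-order-overestimator-cont-nondecr}.

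I do not expect a substantive obstacle here: the whole argument rests on the nonnegativity of the dropped term, and the only point requiring care is the bookkeeping of the variable substitution --- correctly identifying which argument of \thref{res:inequality-general} plays the role of $x$ versus $\hat{x}$, and confirming that the $[\cdot]^+$ in each increment guarantees a nonnegative step so that monotonicity applies term by term. I would also note that component-wise concavity (the DR part of the hypothesis) is not actually needed for this reduction; non-decreasingness alone suffices, with DR-submodularity serving as the standing assumption of the paper.
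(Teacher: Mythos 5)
Your proof is correct and is essentially the paper's own argument: the paper likewise obtains \eqref{eqn:first-order-overestimator-cont-nondecr} from \thref{res:inequality-general} by noting that monotonicity makes every marginal gain $\rho(\delta\,|\,\cdot)$ with $\delta \geq 0$ nonnegative, so the subtracted sum in \eqref{eqn:first-order-overestimator-cont} may be dropped (the paper's proof cites \eqref{lastinequality-1} here, but it clearly means this subtracted term). Your closing observation that component-wise concavity is never used is also accurate, as the paper's proof invokes only non-decreasingness together with the submodularity already embedded in \thref{res:inequality-general}.
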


\begin{proof}
For a non-decreasing $F$, $\rho_i(\delta\,|\, x)\geq 0$ for all $i$, $\delta\geq 0$, and $x\in \mathcal{X}$. Thus, the last term of \eqref{lastinequality-1} is non-positive, yielding the result.

\end{proof}

\subsection{Results for differentiable \normalfont \emph{F}}

Next, we consider the following gradient-based first-order estimator of $F$ based on support point $\hat{x}$ (see Figure \ref{fig:first-order-overestimators}):

\begin{equation} \label{eqn:gradient-first-order-estimator}
    \tilde{F}(x;\hat{x}) = F(\hat{x}) + \nabla F(\hat{x})^{T}[x-\hat{x}]^{+} \quad \forall x,\hat{x}\in \mathcal{X},
\end{equation}

as well as the overestimator $\tilde{F}(x) = \min_{\hat{x}\in \mathcal{X}} \tilde{F}(x;\hat{x})$ defined over $\mathcal{X}$. The following result shows that $\tilde{F}(x)$ is a perfect estimator and that $\tilde{F}(x;\hat{x})$ is a global overestimator of $F(x)$.

\begin{proposition} \thlabel{res:gradient-based-first-order}
(Gradient-based first-order overestimator.) For a non-decreasing continuous differentiable DR-submodular function $F$, the following hold:
\begin{equation} \label{eqn:gradient-first-order-overestimator}
   F(x) \leq \tilde{F}(x;\hat{x}) \quad \forall x, \hat{x} \in \mathcal{X}
\end{equation}
\begin{equation} \label{eqn:perfect-estimator-at-xHat}
   \tilde{F}(x) = F(x) \quad \forall x \in \mathcal{X}
\end{equation} 
\end{proposition}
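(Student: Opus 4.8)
The plan is to derive \eqref{eqn:gradient-first-order-overestimator} by combining the non-decreasing first-order estimator of \thref{res:inequality-non-decreasing} with the tangent bound that component-wise concavity supplies, and then to obtain \eqref{eqn:perfect-estimator-at-xHat} by sandwiching $\tilde{F}(x)$ between $F(x)$ and $\tilde{F}(x;x)$.

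For \eqref{eqn:gradient-first-order-overestimator}, I would start from \thref{res:inequality-non-decreasing}, which gives
\[
F(x) \leq F(\hat{x}) + \sum_{t=1}^n \rho([x_t-\hat{x}_t]^+\,|\,\hat{x}).
\]
The key step is to bound each marginal gain by the corresponding gradient term. Since $F$ is DR-submodular it is concave in each coordinate, so for the nonnegative increment $\delta=[x_t-\hat{x}_t]^+$ the restriction $s\mapsto F(\hat{x}+se_t)$ lies below its tangent at $s=0$; differentiability guarantees that this tangent has slope $\partial F(\hat{x})/\partial x_t$. Hence $\rho([x_t-\hat{x}_t]^+\,|\,\hat{x}) \leq [x_t-\hat{x}_t]^+\,\partial F(\hat{x})/\partial x_t$. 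Summing over $t$ yields $\sum_{t=1}^n \rho([x_t-\hat{x}_t]^+\,|\,\hat{x}) \leq \nabla F(\hat{x})^T[x-\hat{x}]^+$, and substituting into the displayed inequality gives exactly $F(x)\leq \tilde{F}(x;\hat{x})$.

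For \eqref{eqn:perfect-estimator-at-xHat} I would prove both directions. Because \eqref{eqn:gradient-first-order-overestimator} holds for every $\hat{x}$, taking the minimum over $\hat{x}$ preserves the bound and gives $\tilde{F}(x)\geq F(x)$. For the reverse inequality, evaluating the estimator at the support point $\hat{x}=x$ makes $[x-\hat{x}]^+$ vanish, so $\tilde{F}(x;x)=F(x)$; since $\tilde{F}(x)$ is the minimum over all support points, $\tilde{F}(x)\leq \tilde{F}(x;x)=F(x)$. The two bounds together yield the equality.

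I expect the only nontrivial step to be the tangent bound on the marginal gain, since that is where component-wise concavity and differentiability jointly enter; everything else is substitution and an elementary sandwich argument. One point worth stating carefully is that the tangent inequality is invoked only at the nonnegative increment $\delta=[x_t-\hat{x}_t]^+$, so concavity of $F$ along the relevant half-line within each $\mathcal{X}_t$ is all that is needed, and no assumption beyond component-wise concavity is required.
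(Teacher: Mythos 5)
Your proposal is correct and follows essentially the same route as the paper's proof: bounding each marginal gain $\rho([x_t-\hat{x}_t]^+\,|\,\hat{x})$ by the tangent term $\frac{\partial F(\hat{x})}{\partial x_t}[x_t-\hat{x}_t]^+$ via component-wise concavity, substituting into \thref{res:inequality-non-decreasing}, and then deducing \eqref{eqn:perfect-estimator-at-xHat} from \eqref{eqn:gradient-first-order-overestimator} together with $\tilde{F}(x;x)=F(x)$. Your write-up is in fact slightly more careful than the paper's (which contains a typo, conditioning the marginal gain on $x$ rather than $\hat{x}$), but the argument is the same.
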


\begin{proof}
To show \eqref{eqn:gradient-first-order-overestimator}, 
because DR-submodular functions are  component-wise concave, $\rho_{i}([x_{i}-\hat{x}_{i}]^{+}\,|\,x) \leq \frac{\partial F(\hat{x})}{\partial x_{i}}[x_{i}-\hat{x}_{i}]^{+}$. Thus, because $\tilde{F}(x;\hat{x})$ is an upper bound on the right hand side of \eqref{eqn:first-order-overestimator-cont-nondecr}, the result holds. Then, \eqref{eqn:perfect-estimator-at-xHat} follows from \eqref{eqn:gradient-first-order-overestimator} and the fact that $\tilde{F}(x;x) = F(x)$ (see \eqref{eqn:gradient-first-order-estimator}). 
\end{proof}

Figure \ref{fig:first-order-overestimator-1d} shows the gradient-based first-order overestimator for the function $F(x)=\sqrt{x}$, and \ref{fig:first-order-overestimator-2d} for a two-dimensional submodular function. Note that these overestimators are piecewise linear but not concave.

\begin{figure}
    \begin{subfigure}{0.45\textwidth}
	\centering
	\includegraphics[width=\textwidth]{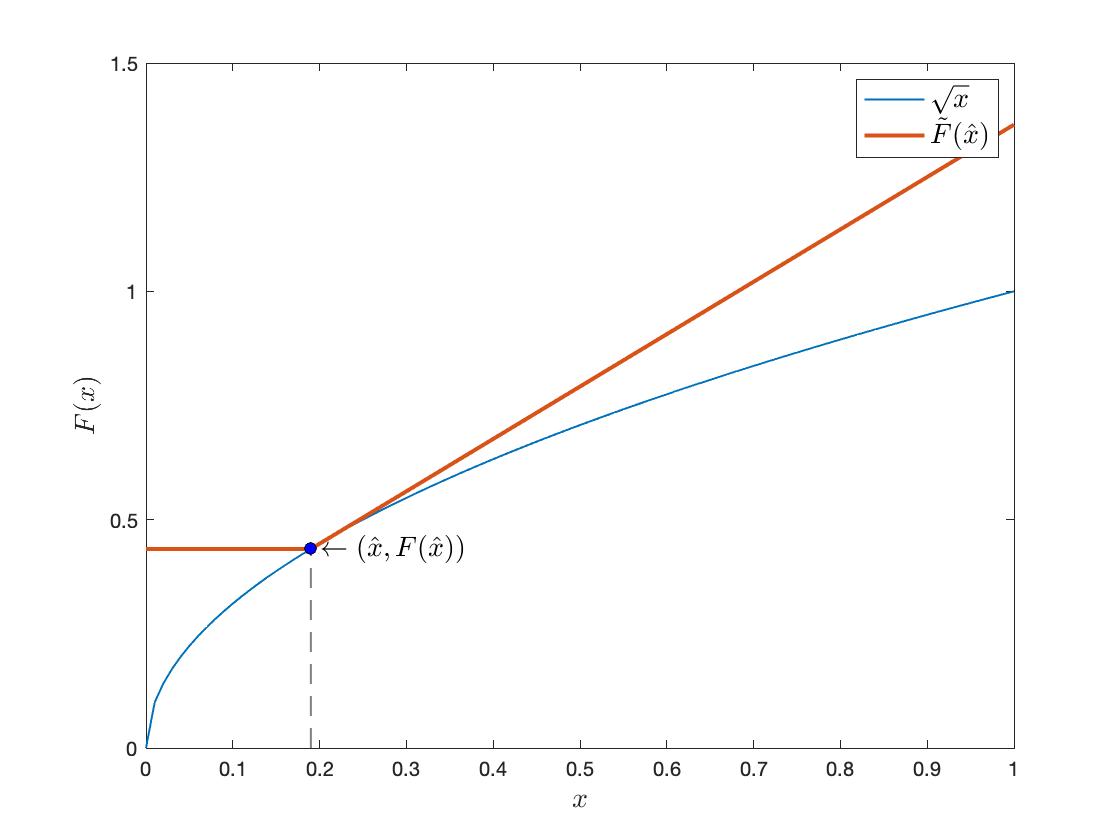}
	\caption{Gradient-based first-order overestimator $\tilde{F}(x;\hat{x})$ of the function $F(x)=\sqrt{x}$.}
	\label{fig:first-order-overestimator-1d}
	\end{subfigure}
	\hspace{1em}
	\begin{subfigure}{0.5\textwidth}
	\centering
	\includegraphics[width=\textwidth]{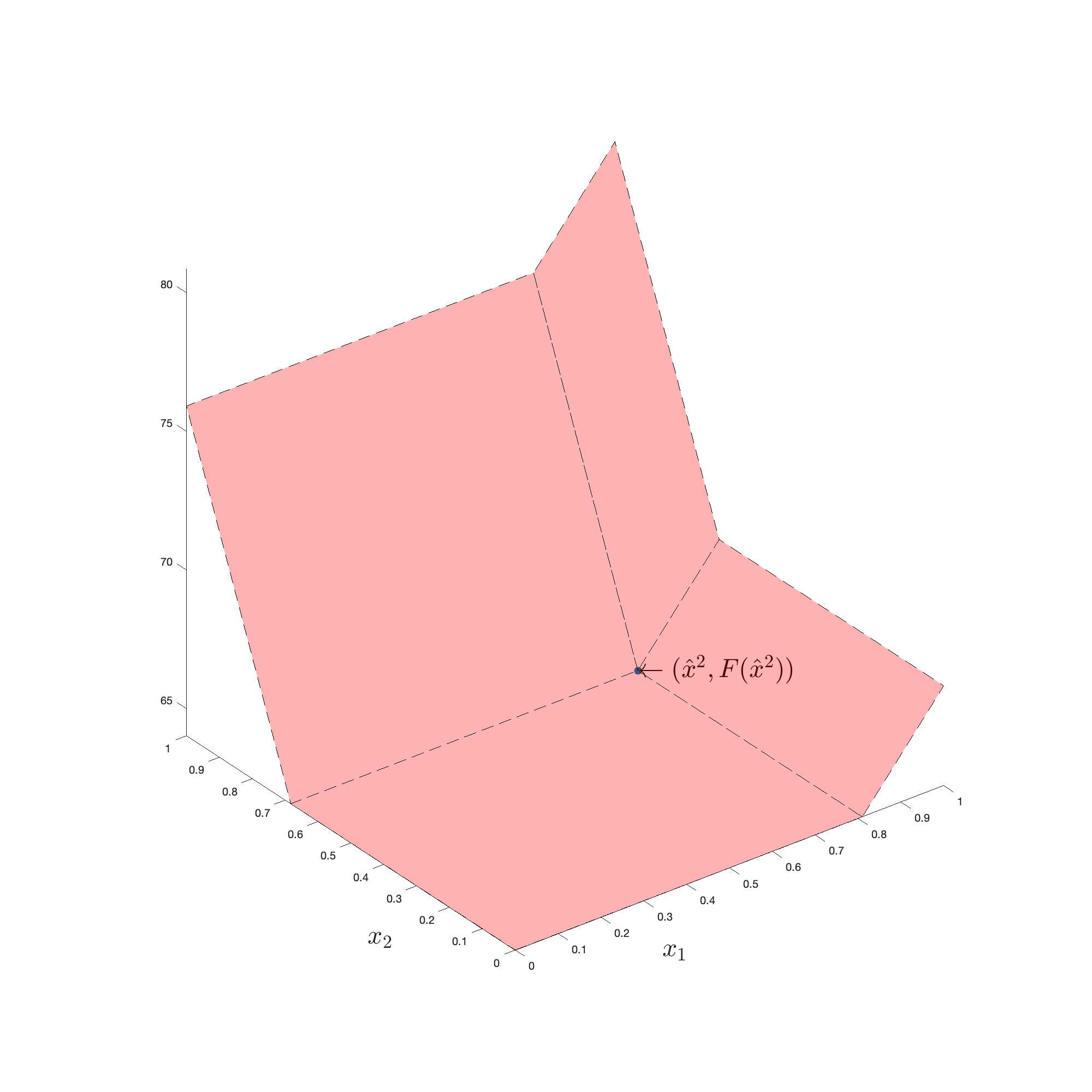}
	\caption{Gradient-based first-order overestimator $\tilde{F}(x;\hat{x})$ of 2-dimensional submodular function.}
	\label{fig:first-order-overestimator-2d}
    \end{subfigure}
    \caption{Gradient-based first-order overestimators $\tilde{F}(x;\hat{x})$ \eqref{eqn:first-order-overestimator-cont-nondecr}.}
    \label{fig:first-order-overestimators}
\end{figure}

Next, we show that the gradient-based first-order overestimators \eqref{eqn:gradient-first-order-overestimator} form the hypograph of $F$. Let $\text{hyp } F = \{(\eta, x) \,|\, \eta \leq F(x) \}$ denote the hypograph of a continuous submodular function $F$. Further, let $Y =\{(\eta,x) \,|\, \eta \leq \tilde{F}(x) \text{ for all } x \in \mathcal{X}\}$.

\begin{lemma} \thlabel{res:equivalent-sets}
For a differentiable non-decreasing continuous DR-submodular function $F$, $(\eta,x)\in Y$ iff 
$(\eta,x) \in \textrm{hyp } F$.
\end{lemma}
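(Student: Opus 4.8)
The plan is to reduce this set identity to the pointwise equality $\tilde F(x)=F(x)$ that was already established as \eqref{eqn:perfect-estimator-at-xHat} in the preceding proposition. First I would unpack the definition of $Y$: since $\tilde F(x)=\min_{\hat x\in\mathcal X}\tilde F(x;\hat x)$, the defining condition $\eta\le\tilde F(x)$ is equivalent to requiring $\eta\le\tilde F(x;\hat x)$ simultaneously for every support point $\hat x\in\mathcal X$. Thus membership in $Y$ says exactly that $(\eta,x)$ lies weakly below the entire family of gradient-based first-order overestimators at once, i.e.\ below their pointwise infimum $\tilde F$.

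For the forward inclusion $Y\subseteq\text{hyp } F$, I would take $(\eta,x)\in Y$, so that $\eta\le\tilde F(x)$, and then invoke the perfect-estimator identity \eqref{eqn:perfect-estimator-at-xHat} to replace $\tilde F(x)$ by $F(x)$, giving $\eta\le F(x)$ and hence $(\eta,x)\in\text{hyp } F$. For the reverse inclusion $\text{hyp } F\subseteq Y$, I would take $(\eta,x)\in\text{hyp } F$, so $\eta\le F(x)$; the overestimator bound \eqref{eqn:gradient-first-order-overestimator} gives $F(x)\le\tilde F(x;\hat x)$ for every $\hat x$, hence $F(x)\le\tilde F(x)$, and therefore $\eta\le\tilde F(x)$, i.e.\ $(\eta,x)\in Y$. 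Equivalently, this direction too is immediate from $F(x)=\tilde F(x)$, so both inclusions in fact collapse onto that single identity.

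I expect no genuine obstacle here: all the analytic substance was already spent in proving the perfect-estimator property, and this lemma is essentially bookkeeping that repackages \eqref{eqn:perfect-estimator-at-xHat} as a statement about sets. The only points deserving care are purely formal — confirming that the two sets sit in the same ambient space $\mathbb R\times\mathcal X$, and being precise that the condition $\eta\le\tilde F(x)$ means lying below the \emph{infimum} of the family $\{\tilde F(\,\cdot\,;\hat x)\}_{\hat x\in\mathcal X}$ rather than below any single member, so that the quantifier over $\hat x$ implicit in the definition of $\tilde F$ is handled correctly.
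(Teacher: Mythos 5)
Your proof is correct and follows essentially the same route as the paper: the inclusion $\textrm{hyp}\, F \subseteq Y$ via the overestimator bound \eqref{eqn:gradient-first-order-overestimator}, and the inclusion $Y \subseteq \textrm{hyp}\, F$ via the perfect-estimator identity, which the paper simply inlines by evaluating $\tilde{F}(x;x) = F(x)$ rather than citing \eqref{eqn:perfect-estimator-at-xHat} as you do. The difference is purely one of citation versus re-derivation, so there is nothing substantive to add.
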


\begin{proof}

Suppose $(\eta,x)\in \textrm{hyp } F$. Then for all $\hat{x}\in \mathcal{X}$
\[ F(\hat{x}) + \nabla F(\hat{x})^{T}[x-\hat{x}]^{+} \geq F(x) \geq \eta, \]

where the first inequality follows from \thref{res:gradient-based-first-order}. Conversely, $(\eta,x)\in Y$ implies that

\[\eta \leq \tilde{F}(x;x) = F(x) + \nabla F(x)^{T}[x-x]^{+} = F(x)\]

\end{proof}

\section{Hypograph Reformulation and Cutting Plane Algorithm} \label{sec:exact-cutting-plane}
Given that the gradient-based first-order estimators form the hypograph of $F$ (\thref{res:equivalent-sets}), in this section we investigate the following hypograph-based reformulation of our original maximization problem \eqref{opt-problem}

\begin{equation}
	\label{eqn:hypograph-formulation}
	\max_{(\eta,x)\in \text{ hyp } F, x\in \mathcal{X}} \eta.
\end{equation}

\subsection{Hypograph Reformulation}
Given the fact that \eqref{eqn:gradient-first-order-estimator} define the hypograph of $F$ (see \thref{res:equivalent-sets}), we can reformulate \eqref{eqn:hypograph-formulation} as:

\begin{subequations}
\begin{alignat}{3}
 & \max_{x\in \mathcal{X}} & \eta & \\
 & \textrm{subject to} \quad & \eta & \leq F(\hat{x}) + \nabla F(\hat{x})^{T}[x-\hat{x}]^+ \quad && \forall \hat{x}\in \mathcal{X}. \label{constr:hypograph-reform-cut}
\end{alignat}
\label{exact-cp-main}
\end{subequations}

Problem \eqref{exact-cp-main} is equivalent to \eqref{opt-problem} in the sense that they have the same optimal objective value. This is because \thref{res:equivalent-sets} implies:

\begin{corollary} \thlabel{res:cutting-plane-equivalence}
(Cutting plane formulation.) $(\eta,x)$ is an optimal solution to \eqref{exact-cp-main} iff $x$ is an optimal solution to \eqref{opt-problem}.
\end{corollary}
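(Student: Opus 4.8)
The plan is to establish the equivalence of optimal objective values between the two problems and then translate this into a statement about optimal solutions. The key observation is that \thref{res:equivalent-sets} already gives us exactly what we need: the feasible region of \eqref{exact-cp-main}, namely the set of pairs $(\eta,x)$ satisfying the cutting plane constraints \eqref{constr:hypograph-reform-cut}, is precisely the set $Y$, which by \thref{res:equivalent-sets} coincides with $\textrm{hyp } F$. So the two problems \eqref{eqn:hypograph-formulation} and \eqref{exact-cp-main} are literally the same optimization problem written in two notations, and it suffices to relate \eqref{eqn:hypograph-formulation} back to the original \eqref{opt-problem}.

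First I would argue the forward direction. Suppose $(\eta^\star, x^\star)$ is optimal for \eqref{exact-cp-main}. By \thref{res:equivalent-sets}, feasibility means $(\eta^\star, x^\star) \in \textrm{hyp } F$, i.e. $\eta^\star \leq F(x^\star)$. Since the objective maximizes $\eta$ and $x^\star$ is fixed in this pair, at optimality we must have $\eta^\star = F(x^\star)$ (otherwise we could raise $\eta$ while staying feasible, contradicting optimality). Then I would show $x^\star$ is optimal for \eqref{opt-problem}: if some $\bar{x} \in \mathcal{X}$ had $F(\bar{x}) > F(x^\star)$, the pair $(F(\bar{x}), \bar{x})$ would be feasible for \eqref{exact-cp-main} with a strictly larger objective, contradicting optimality of $(\eta^\star, x^\star)$.

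Conversely, suppose $x^\star$ solves \eqref{opt-problem}. I would take $\eta^\star = F(x^\star)$ and verify that $(\eta^\star, x^\star)$ is feasible for \eqref{exact-cp-main}, which holds because $(F(x^\star), x^\star) \in \textrm{hyp } F = Y$ by \thref{res:equivalent-sets}. To see it is optimal, note that any feasible $(\eta, x)$ satisfies $\eta \leq F(x) \leq F(x^\star) = \eta^\star$, where the middle inequality uses that $x^\star$ maximizes $F$ over $\mathcal{X}$; hence no feasible pair beats the objective value $\eta^\star$.

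I do not anticipate a genuine obstacle here, since \thref{res:equivalent-sets} does essentially all the work of identifying the feasible set with the hypograph. The only point requiring a little care is the implicit claim that $\eta^\star = F(x^\star)$ at optimality rather than merely $\eta^\star \le F(x^\star)$; this follows immediately from the objective being $\max \eta$, but it is worth stating explicitly so that the correspondence between optimal $(\eta,x)$ pairs and optimal $x$ is clean. One should also confirm that a maximizer of $F$ over the compact set $\mathcal{X}$ exists so that both problems are well posed, which holds by continuity of $F$ and compactness of $\mathcal{X}$.
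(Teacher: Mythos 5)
Your proof is correct and follows the same route the paper intends: it invokes \thref{res:equivalent-sets} to identify the feasible region of \eqref{exact-cp-main} with $\textrm{hyp}\, F$, then runs the standard hypograph-reformulation argument in both directions. The paper leaves these details implicit (it simply asserts the corollary follows from the lemma), so your write-up, including the explicit observation that $\eta^\star = F(x^\star)$ at optimality and the choice $\eta^\star = F(x^\star)$ in the converse, is a faithful and complete filling-in of the paper's one-line justification.
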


Because constraints \eqref{constr:hypograph-reform-cut} are non-convex, we will now present an extended linear formulation with binary variables. 




We first show an \textit{ideal} formulation for the \textit{graph} of the positive part of of a particular variable in \eqref{constr:hypograph-reform-cut}, i.e., $[x_i-\hat{x}_i]^+$. An ideal formulation is one for which the extreme points of the linear programming relaxation are binary-valued.

This graph over the interval $[\ell_i,u_i]$ is defined as:

\begin{equation}
    \text{gr}([x_i-\hat{x}_i]^+;[\ell_i,u_i]) = \{(x_i,y_{i}) \in \mathbb{R}^n \times \mathbb{R}, y_{i} = [x_i-\hat{x}_i]^+,\ell_i\leq x_i\leq u_i\}
\end{equation}

\cite{Anderson.2018} show that a tightened big-$M$ formulation is an ideal formulation for $\text{gr}([w^Tx + b]^+;[\ell,u])$. A special case of this result is that the following formulation is ideal for $\text{gr}([x_i-\hat{x}_i]^+;[\ell_i,u_i])$:

\begin{subequations}
\begin{alignat}{3}
    && y_{i}& \leq x_i - \ell_i(1 - z_{i})- \hat{x}_i z_{i} ,\label{eqn:exact-single-var-linearize1}\\
    && y_{i}& \leq (-\hat{x}_i + u_i)z_{i},\label{eqn:exact-single-var-linearize2}\\
    && y_{i}& \geq x_i - \hat{x}_i,\label{eqn:exact-single-var-linearize3}\\
    && (x_i,y_{i},z_i) & \in \mathbb{R}\times \mathbb{R}_{\geq 0} \times [0,1], \label{eqn:exact-single-var-linearize4} \\
    && z_{i}& \in \{0,1\}. \label{eqn:exact-single-var-linearize5}
\end{alignat}
\label{exact-main-prob-bigm-single-var}
\end{subequations}

Given that each constraint \eqref{constr:hypograph-reform-cut} is a weighted sum of separable $[x_i-\hat{x}_i]^+$ terms plus a constant, then the ideal formulations for each $[x_i-\hat{x}_i]^+$ term can be combined to form an ideal formulation of the graph of $F(\hat{x}) + \nabla F(\hat{x})^{T}[x-\hat{x}]^+$ for a given $\hat{x}$. 

The following is a valid, albeit infinite, formulation for the hypograph of $F$.

\begin{subequations}
\begin{alignat}{3}
    && y_{i,\hat{x}}& \leq x_i - \ell_i(1 - z_{i,\hat{x}})- \hat{x}_i z_{i,\hat{x}} & \quad\forall i\in [n],\hat{x}\in \mathcal{X}, \label{eqn:hypograph-form-1}\\
    && y_{i,\hat{x}}& \leq (-\hat{x}_i + u_i)z_{i,\hat{x}} & \forall i\in [n],\hat{x}\in \mathcal{X},\label{eqn:hypograph-form-2}\\
    && y_{i,\hat{x}}& \geq x_i - \hat{x}_i & \forall i\in [n],\hat{x}\in \mathcal{X},\label{eqn:hypograph-form-3}\\
    && x & \in [\ell,u], \label{eqn:hypograph-form-4} \\
    && z_{i,\hat{x}}& \in \{0,1\} & \forall i\in [n],\hat{x}\in \mathcal{X},\\
    && y_{i,\hat{x}}& \geq 0 & \forall i\in [n],\hat{x}\in \mathcal{X},\\
    && \eta & \leq F(\hat{x}) + \sum_{i=1}^n \frac{\partial F(\hat{x})}{x_i}  y_{i,\hat{x}} & \forall \hat{x}\in \mathcal{X},\\
    &&  \eta & \geq 0.
\end{alignat}
\label{exact-main-prob-bigm}
\end{subequations}

\subsection{Hypograph-Based Cutting Plane Algorithm}
Given the result of \thref{res:cutting-plane-equivalence} and the fact that \eqref{exact-main-prob-bigm} is an infinite formulation, we present the following exact cutting plane algorithm. Let $Q$ be the number of iterations executed by the algorithm thus far, producing support points $\hat{x}^1, \dots, \hat{x}^Q$. The cutting plane main problem after $Q$ iterations is:

\begin{subequations}
\begin{alignat}{3}
    & \max_{x\in \mathcal{X}} & \eta & \\
    & \textrm{subject to} \quad & y_{iq}& \leq x_i - \ell_i(1 - z_{iq})- \hat{x}_i^q z_{iq} & \quad \forall i\in [n],q\in[Q], \label{eqn:cutting-plane-constr-1}\\
    && y_{iq}& \leq (-\hat{x}_i^q + u_i)z_{iq} & \forall i\in [n],q\in[Q],\label{eqn:cutting-plane-constr-2}\\
    && y_{i,q}& \geq x_i - \hat{x}_i^q & \forall i\in [n],q\in[Q],\label{eqn:hypograph-form-3-master}\\
    && z_{i,q}& \in \{0,1\} & \forall i\in [n],q\in[Q],\label{eqn:hypograph-form-4-master}\\
    && y_{i,q}& \geq 0 & \forall i\in [n],q\in[Q],\label{eqn:hypograph-form-5-master}\\
    && \eta & \leq F(\hat{x}^q) + \sum_{i=1}^n \frac{\partial F(\hat{x}^q)}{x_i}  y_{iq} & q\in[Q],\\
    &&  \eta & \geq 0. \label{eqn:cutting-plane-constr-last}
\end{alignat}
\label{formulation:exact-restricted-main}
\end{subequations}


Algorithm \ref{alg:exact-cutting-plane} shows pseudocode for the exact cutting plane algorithm.

\begin{algorithm}
	{\sc ExactCuttingPlane}($\ell, u, \epsilon$)
	\begin{algorithmic}[1]
		\STATE $q \gets 1$
		\STATE $LB_q \gets 0, UB_q \gets +\infty$
		\WHILE{$UB_q - LB_q > \epsilon$}
		\STATE Solve main problem \eqref{formulation:exact-restricted-main} to obtain solution $(\eta^*,x^*)$.
		\STATE $\hat{x}^q \gets x^*$
		\STATE Add a cut to the main problem of the form: $\eta \leq F(\hat{x}^q) + \sum_{i=1}^n \frac{\partial F(\hat{x}^q)}{x_i}  y_{iq}$ along with variables $y_{iq}$ and $z_{iq}$ and constraints \eqref{eqn:cutting-plane-constr-1}--\eqref{eqn:hypograph-form-5-master} for $i\in[n]$. \label{alg:add-cut-step}
		\STATE $LB_q \gets F(x^*), UB_q \gets \eta^*$.
		\STATE $q \gets q+1$.
		\ENDWHILE
		\STATE \RETURN $LB_q$, $UB_q, x^*$
	\end{algorithmic}
	\caption{Exact cutting plane algorithm for continuous submodular maximization.}
	\label{alg:exact-cutting-plane}
\end{algorithm}

The following result shows that Algorithm \ref{alg:exact-cutting-plane} outputs an $\epsilon$-optimal solution.

\begin{proposition} \thlabel{cutting-plan-equivalence}
(Convergence of exact cutting plane algorithm.) $F(x^*) + \epsilon \geq \max_{x\in \mathcal{X}} F(x)$, where $x^*$ is the solution returned by Algorithm \ref{alg:exact-cutting-plane}.
\end{proposition}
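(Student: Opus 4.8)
The plan is to show that the scalars $UB_q$ and $LB_q$ maintained by Algorithm \ref{alg:exact-cutting-plane} are, at every iteration, a valid upper bound and a valid lower bound on $\max_{x\in\mathcal{X}}F(x)$, respectively, and then to read off the claim from the loop's stopping test. First I would dispose of the lower bound, which is immediate: the point $x^*$ produced by the restricted main problem \eqref{formulation:exact-restricted-main} satisfies $x^*\in\mathcal{X}$, so it is feasible for the original problem \eqref{opt-problem} and therefore $LB_q = F(x^*)\leq \max_{x\in\mathcal{X}}F(x)$.

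The substantive step is the upper bound $UB_q=\eta^*\geq \max_{x\in\mathcal{X}}F(x)$, and this is where I would spend the effort. The key observation is that at iteration $q$ the restricted main problem \eqref{formulation:exact-restricted-main} is nothing but the full hypograph formulation \eqref{exact-main-prob-bigm} with the cuts, variables $y_{i,\hat{x}},z_{i,\hat{x}}$, and constraints \eqref{eqn:hypograph-form-1}--\eqref{eqn:hypograph-form-3} retained only at the finitely many support points $\hat{x}^1,\dots,\hat{x}^Q$ and dropped at every other $\hat{x}\in\mathcal{X}$. Since discarding constraints from a maximization can only enlarge the feasible region, the optimal value $\eta^*$ of the restricted problem is at least the optimal value of \eqref{exact-main-prob-bigm}. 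By \thref{res:cutting-plane-equivalence} the optimal value of \eqref{exact-main-prob-bigm} equals $\max_{x\in\mathcal{X}}F(x)$, so $UB_q=\eta^*\geq \max_{x\in\mathcal{X}}F(x)$ holds at every iteration, including the last one.

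Combining the two bounds with the termination condition $UB_q-LB_q\leq\epsilon$ in force when the while loop exits yields
\[
\max_{x\in\mathcal{X}}F(x)\ \leq\ UB_q\ \leq\ LB_q+\epsilon\ =\ F(x^*)+\epsilon,
\]
which is exactly the assertion. The main obstacle is making the relaxation argument of the preceding paragraph airtight: one must be sure that \eqref{formulation:exact-restricted-main} is genuinely a relaxation of \eqref{exact-main-prob-bigm} and that the value of \eqref{exact-main-prob-bigm} coincides with $\max_{x\in\mathcal{X}}F(x)$. Both rest on the ideal big-$M$ representation of each $[x_i-\hat{x}_i]^+$ term (so that at any feasible integral point $y_{iq}=[x_i-\hat{x}_i^q]^+$ and the cut reduces to $\eta\leq\tilde{F}(x;\hat{x}^q)$) together with the perfect-estimator identity $\tilde{F}(x)=F(x)$ from \thref{res:gradient-based-first-order}, both of which are already packaged into \thref{res:cutting-plane-equivalence}. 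I should note that this argument establishes $\epsilon$-optimality of the returned solution whenever the loop terminates; finite termination itself is a separate question that the bound above does not by itself settle.
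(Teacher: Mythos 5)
Your proof is correct and takes essentially the same route as the paper's: the paper's own (much terser) proof likewise combines the termination test $\eta^* - F(x^*) \leq \epsilon$ with the relaxation fact $\eta^* \geq \max_{x\in \mathcal{X}} F(x)$, which it asserts without justification and which you properly ground in the observation that \eqref{formulation:exact-restricted-main} keeps only finitely many of the constraints of \eqref{exact-main-prob-bigm}, together with \thref{res:cutting-plane-equivalence}. Your closing caveat is also apt --- the proposition only certifies $\epsilon$-optimality upon termination, and neither your argument nor the paper's establishes that the loop terminates finitely.
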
 

\begin{proof}
The algorithm terminates at iteration with a solution $(\eta^*, x^*)$ such that $\eta^* - F(x^*) \leq $. Because $\eta^* \geq \max_{x\in \mathcal{X}}$ the result is obtained. 
\end{proof}

Preliminary experiments showed that the exact cutting plane algorithm (Alg. \ref{alg:exact-cutting-plane}) required significant computation time, in part because the main problem at iteration $Q$ is a mixed-integer program with $nQ$ binary variables. Thus, in the next section we investigate an approximation of the convex hull of the hypograph of a submodular function that yields a cutting plane algorithm with a main problem that is a linear program.

\section{Characterization of the approximate convex hull of the hypograph of \normalfont \emph{F}} \label{sec:approx-hypograph}

In this section we seek to find the convex hull the hypograph of a submodular function $F$ and then use a cutting plane algorithm to optimize over this convex hull. 

\subsection{Convex hull of individual cuts}
We begin by finding the convex hull of the hypograph of the gradient-based first-estimator $\tilde{F}(x;\hat{x})$. 

We start by applying the well-known $\Delta$ or triangle envelope of the univariate ReLU function $[x]^+$ to $[x-\hat{x}]^+$ on the interval $[\ell,u]\subseteq \mathbb{R}$; that is:

\begin{equation} \label{eqn:relu-envelope}
    \breve{F}(x;\hat{x}) = F'(\hat{x})\frac{u-\hat{x}}{u-\ell}(x-\ell).
\end{equation}

It is well-known that this is the concave envelope of $[x-\hat{x}]^+$, as Figure \ref{fig:approx-cuts-1d} illustrates.

Because the the concave envelope operation is closed under summation \citep{Falk.1969}, then the following forms the concave envelope of the first-order estimator $\tilde{F}(x;\hat{x})$ in the multivariate case.

\begin{equation}
    \breve{F}(x;\hat{x}) = F(\hat{x}) + \left(\nabla F(\hat{x})\odot \frac{u-\hat{x}}{u - \ell} \right)^T (x-\ell)
    \label{eqn:approx-first-order-estimator}
\end{equation}

Figure \ref{fig:approx-cuts-2d} illustrates $\breve{F}(x;\hat{x})$ for a two-dimensional submodular function.

\begin{figure}
    \begin{subfigure}{0.45\textwidth}
	\centering
	\includegraphics[width=\textwidth]{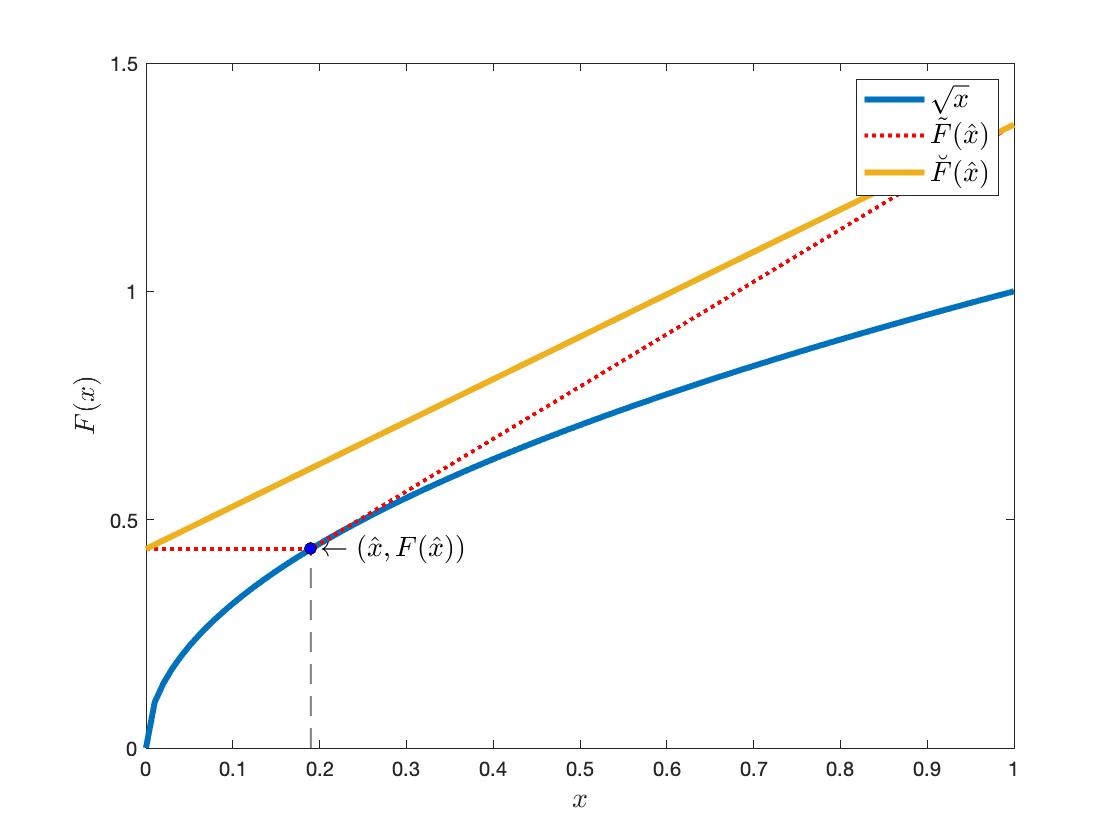}
	\caption{$F(x)=\sqrt{x}$ in blue, gradient-based first-order estimator $\tilde{F}(x;\hat{x})$ as the red dashed line, and the approximate first-order estimator $\breve{F}(x;\hat{x})$ in yellow.}
	\label{fig:approx-cuts-1d}
	\end{subfigure}
	\hspace{1em}
	\begin{subfigure}{0.5\textwidth}
	\centering
	\includegraphics[width=\textwidth]{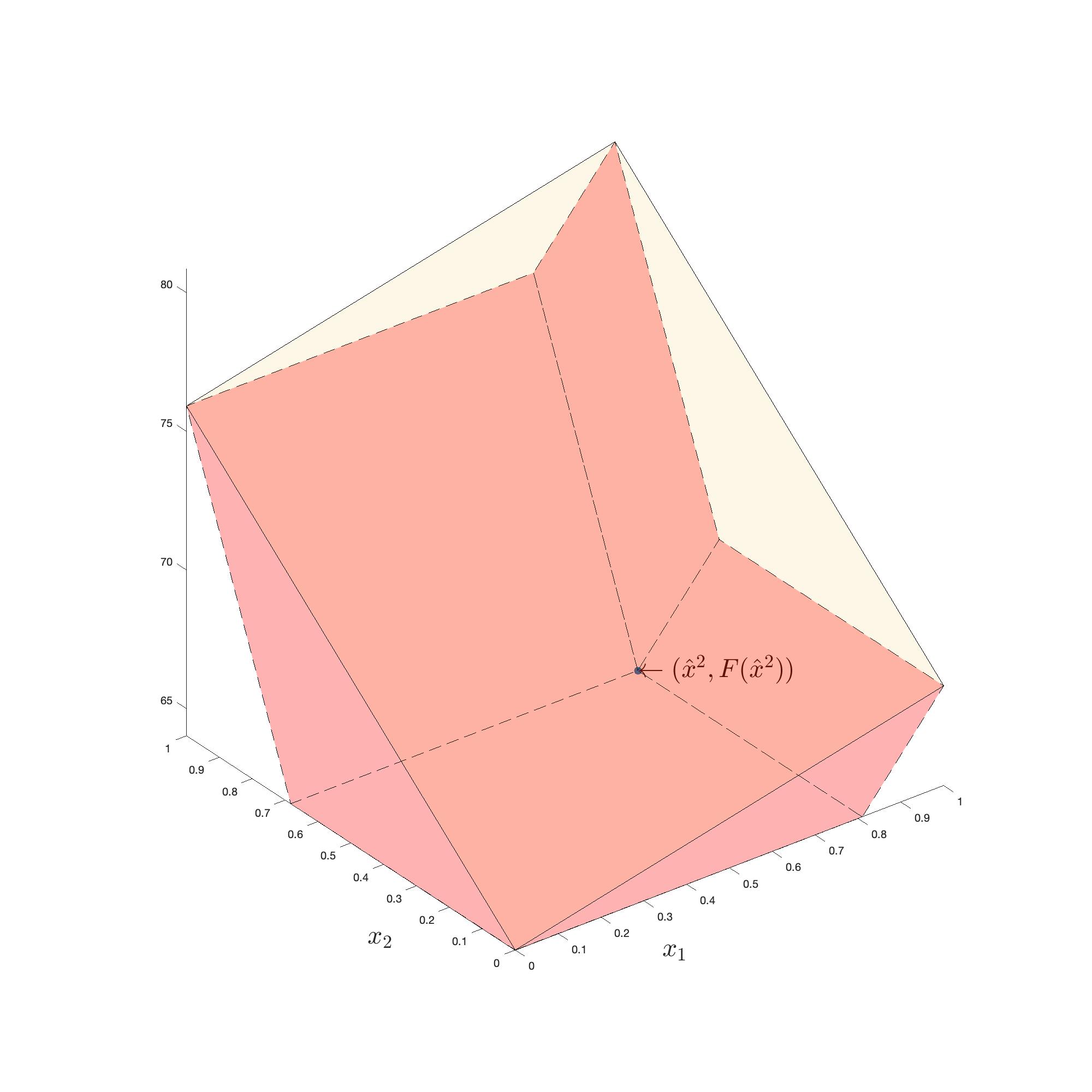}
	\caption{Gradient-based first-order estimator $\tilde{F}(x;\hat{x}^2)$ of 2-dimensional submodular function (obtained after 2nd iteration in cutting plane algorithm) in red and the approximate first-order estimator $\breve{F}(x;\hat{x}^2)$ in yellow.}
	\label{fig:approx-cuts-2d}
    \end{subfigure}
    \caption{Approximate cuts for two functions.}
    \label{fig:approx-cuts}
\end{figure}

It would be convenient if the minimum of the concave envelopes, i.e., $\min_{\hat{x}\in X}\breve{F}(x;\hat{x})$, formed the concave envelope of the minimum of the first-order estimators, i.e., $\min_{\hat{x}\in X} \tilde{F}(x;\hat{x})$. Unfortunately, this is may not be the case, as Figures \ref{fig:approxEnvVsConcaveEnv-1D} and \ref{fig:approxEnvVsConcaveEnv-2D} illustrate.

\begin{figure}
	\centering
	\includegraphics[width=\linewidth]{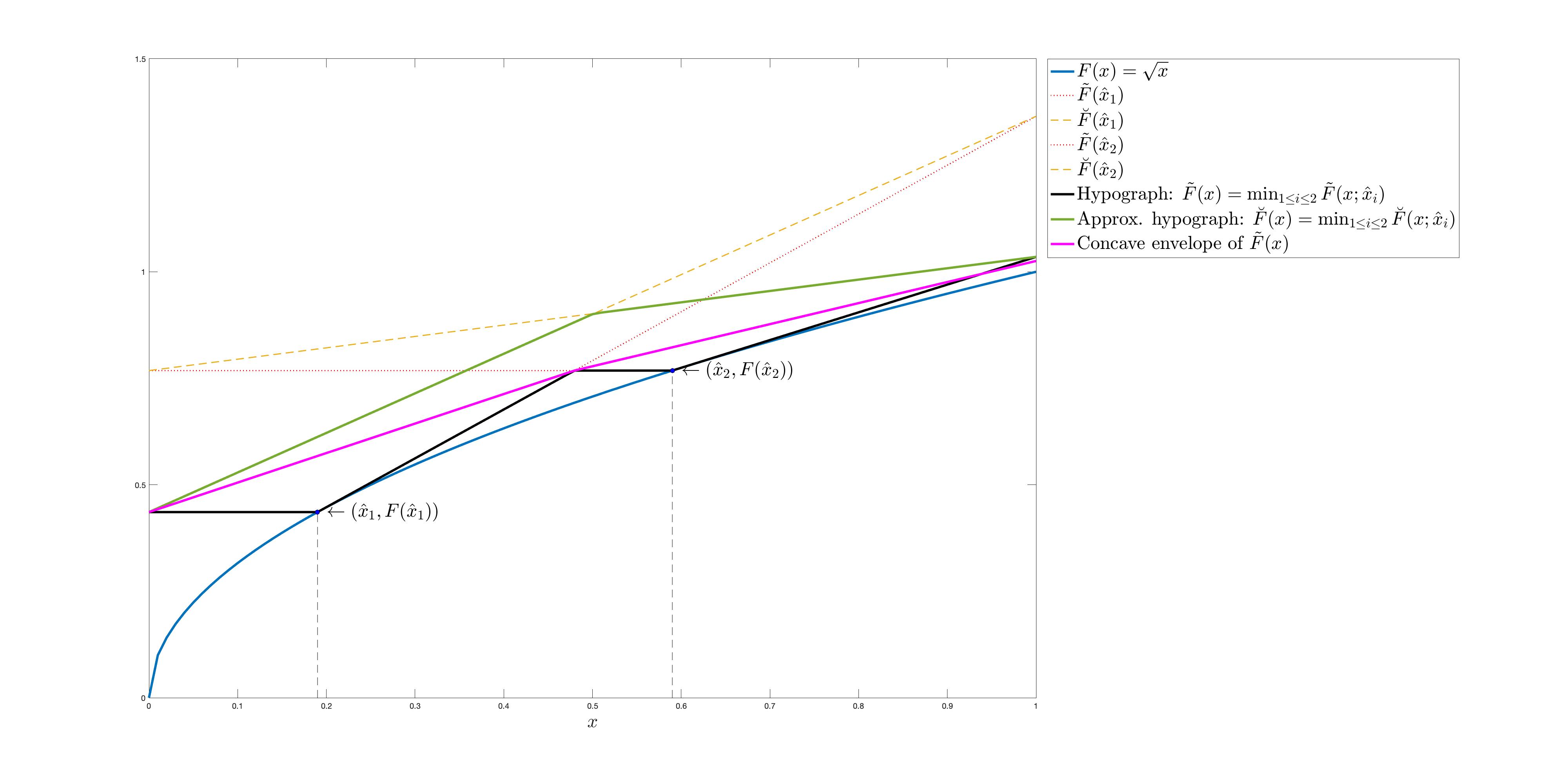}
	\caption{Approximate concave envelope (green) versus exact concave envelope (magenta) for univariate function $F(x)=\sqrt{x}$. As the figure illustrates, the exact concave envelope is below the approximate envelope.}
	\label{fig:approxEnvVsConcaveEnv-1D}
\end{figure}

In one dimension the concave envelope of $\min_{1\leq q\leq Q} \tilde{F}(x;\hat{x}^q)$ is a piecewise linear function with $Q+1$ pieces and can be modeled with disjunctions, e.g., using the well-known multiple-choice formulation (see \cite{Vielma.2010}). However, in general the minimum of several cuts (i.e., $\min_{1\leq q\leq Q} \tilde{F}(x;\hat{x}^q)$) has a large number of faces, which makes finding the concave envelope difficult (Figure \ref{fig:approxEnvVsConcaveEnv-2D} shows the polyhedral surface for the minimum of three cuts in two dimensions). Thus, in this paper we use the minimum of approximate cuts, i.e., $\min_{1\leq q\leq Q} \breve{F}(x;\hat{x}^q)$ to approximate the hypograph of $F$.

\begin{figure}
	\centering
	\includegraphics[width=\linewidth]{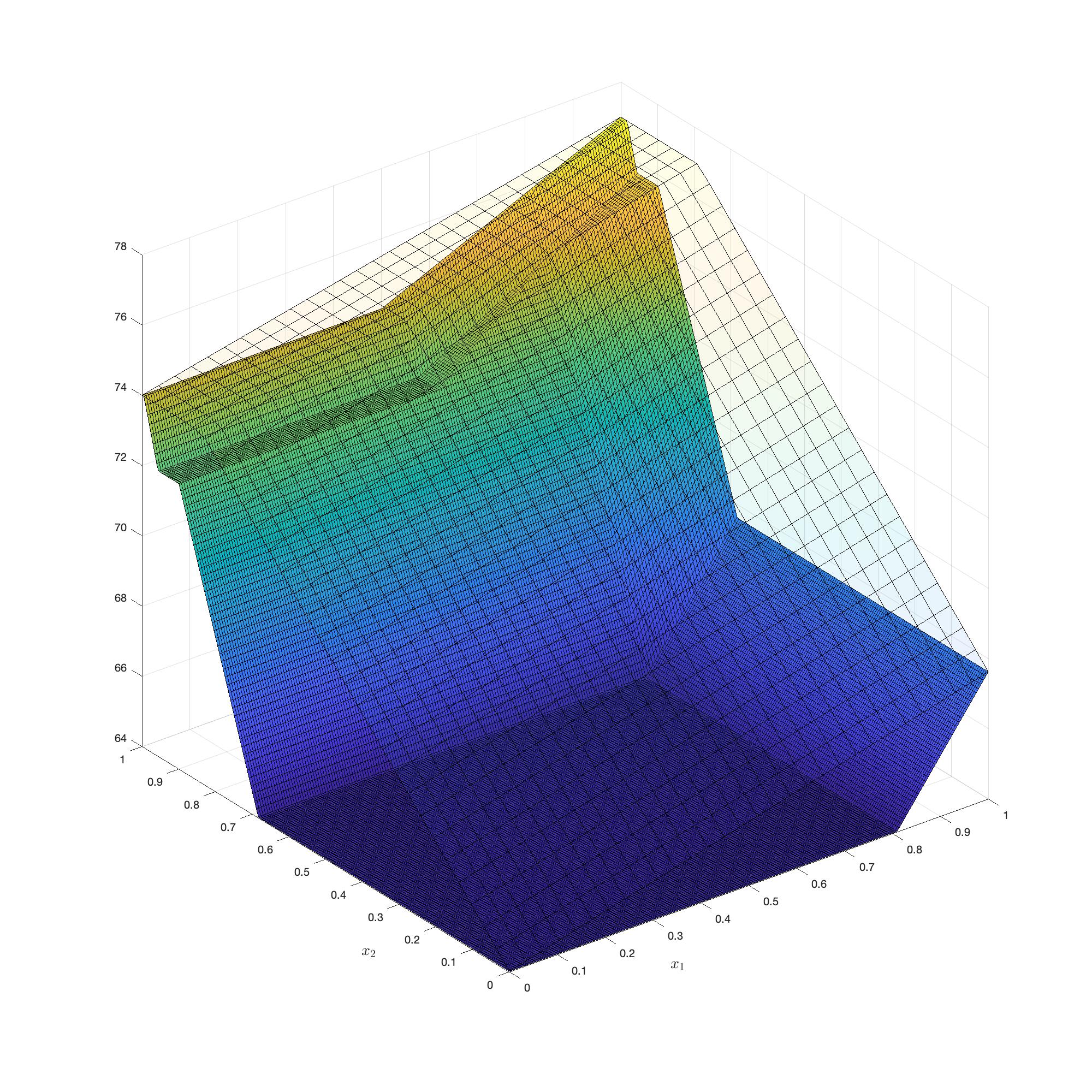}
	\caption{The lower surface is the minimum of three cuts (i.e., $\tilde{F}(x) = \min_{1\leq q\leq 3} \tilde{F}(x;\hat{x}^q)$) for a two-dimensional continuous submodular function. The upper surface is the approximate concave envelope $\breve{F}(x)=\min_{1\leq q\leq 3} \breve{F}(x;\hat{x}^q)$. As the figure shows, $\breve{F}(x)$ is not the exact concave envelope of $\tilde{F}(x)$.}
	\label{fig:approxEnvVsConcaveEnv-2D}
\end{figure}

\subsection{Approximation error of approximate convex hull}
In this section we investigate the worst-case error incurred when approximating the hypograph of a submodular function $F$ via its approximate convex hull.

First, we analyze the difference between the approximate estimators and the non-concave first-order estimators, i.e., $\breve{F}(x;\hat{x}) - \tilde{F}(x;\hat{x})$.

The error for a single cut is:

\begin{equation}
\breve{F}(x;\hat{x}) - \tilde{F}(x;\hat{x}) \leq \nabla F(\hat{x})^T\left( \frac{u-\hat{x}}{u - \ell}\odot (x-\ell) - [x-\hat{x}]^+ \right)
    \label{eqn:error-bound1}
\end{equation}

The following result identifies the point at which the error is maximized.

\begin{proposition} \label{res:error-maximizer}
For a given $\hat{x}\in [\ell,u]$, the error is maximized at the point $x=\hat{x}$ so that the following holds.

\begin{equation}
    \breve{F}(x;\hat{x}) - \tilde{F}(x;\hat{x}) \leq  \nabla  F(\hat{x}) ^T \left( \frac{u-\hat{x}}{u-\ell} \odot (\hat{x}-\ell) \right)
    \label{eqn:error-bound2}
\end{equation}

\end{proposition}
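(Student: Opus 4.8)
The plan is to start from the single-cut error expression \eqref{eqn:error-bound1} and maximize its right-hand side over $x\in[\ell,u]$. First I would expand the inner products to expose that the bound is separable across coordinates:
\[
\breve{F}(x;\hat{x}) - \tilde{F}(x;\hat{x}) \leq \sum_{i=1}^n \frac{\partial F(\hat{x})}{\partial x_i}\left( \frac{u_i-\hat{x}_i}{u_i-\ell_i}(x_i-\ell_i) - [x_i-\hat{x}_i]^+ \right).
\]
Because $F$ is non-decreasing, each coefficient $\partial F(\hat{x})/\partial x_i$ is non-negative, and because $[\ell,u]$ is a product of intervals, the maximization over $x$ splits into $n$ independent one-dimensional maximizations of the gap $g_i(x_i) := \frac{u_i-\hat{x}_i}{u_i-\ell_i}(x_i-\ell_i) - [x_i-\hat{x}_i]^+$ over $x_i\in[\ell_i,u_i]$.

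Next I would analyze each $g_i$ as a piecewise-linear function whose single kink sits at $x_i=\hat{x}_i$. On $[\ell_i,\hat{x}_i]$ the rectifier term vanishes, so $g_i$ is linear with slope $\frac{u_i-\hat{x}_i}{u_i-\ell_i}\geq 0$ and is non-decreasing. On $[\hat{x}_i,u_i]$ the rectifier equals $x_i-\hat{x}_i$, so $g_i$ has slope $\frac{u_i-\hat{x}_i}{u_i-\ell_i}-1=\frac{\ell_i-\hat{x}_i}{u_i-\ell_i}\leq 0$, using $\hat{x}_i\geq \ell_i$, and is therefore non-increasing. Hence each $g_i$ attains its maximum at the breakpoint $x_i=\hat{x}_i$, where $g_i(\hat{x}_i)=\frac{u_i-\hat{x}_i}{u_i-\ell_i}(\hat{x}_i-\ell_i)$.

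Finally I would reassemble the coordinatewise maxima: substituting $x=\hat{x}$ term by term recovers exactly $\nabla F(\hat{x})^T\!\left(\frac{u-\hat{x}}{u-\ell}\odot(\hat{x}-\ell)\right)$, which is the right-hand side of \eqref{eqn:error-bound2}. The only delicate point—and the closest thing to an obstacle—is justifying that the maximum can be pushed inside the sum, i.e. that all coordinates can be optimized simultaneously at their shared maximizer $x_i=\hat{x}_i$. This is legitimate precisely because monotonicity of $F$ forces every weight $\partial F(\hat{x})/\partial x_i$ to be non-negative; were a weight negative, the corresponding separable term would instead be driven to an interval endpoint rather than to the kink, and the clean bound would fail. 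Everything else reduces to the elementary fact that the triangle (concave) envelope of a univariate rectifier deviates most from the rectifier at its breakpoint.
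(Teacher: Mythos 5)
Your proof is correct and follows essentially the same route as the paper's: both exploit the separability of the right-hand side of \eqref{eqn:error-bound1} across coordinates and then show, by a slope analysis of the piecewise-linear gap on either side of the kink (non-negative slope $\frac{u_i-\hat{x}_i}{u_i-\ell_i}$ for $x_i\leq \hat{x}_i$, non-positive slope $\frac{u_i-\hat{x}_i}{u_i-\ell_i}-1$ for $x_i\geq \hat{x}_i$), that each coordinatewise term peaks at $x_i=\hat{x}_i$. Your explicit remark that the non-negativity of the weights $\partial F(\hat{x})/\partial x_i$ is what licenses optimizing all coordinates simultaneously at the shared breakpoint is a point the paper leaves implicit, but it is the same argument.
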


\begin{proof}
Because the right-hand side of \eqref{eqn:error-bound1} is separable, it is enough to find the maximizer of $\frac{\partial F(\hat{x})}{\partial x_i} \left(\frac{u_i-\hat{x}_i}{u_i-\ell_i}(x_i - \ell_i)-[x_i-\hat{x}_i]^+\right)$. We divide the proof into two overlapping cases. First, when $x_i\leq \hat{x}_i$ the error is maximized at the upper limit $\hat{x}_i$ because the derivative of the error with respect to $x_i$ is $\frac{\partial F(\hat{x})}{\partial x_i} \frac{u_i-\hat{x}_i}{u_i-\ell_i}$, which is non-negative due to $F$ being non-decreasing. For the second case in which $x_i \geq \hat{x}_i$ this derivative is $\frac{\partial F(\hat{x})}{\partial x_i} \left( \frac{u_i-\hat{x}_i}{u_i-\ell_i}-1\right)\leq 0$. Thus, the error is maximized at the lower limit $\hat{x}_i$.
\end{proof}

The following result (without proof) lists two more implications of \eqref{eqn:error-bound1}.

\begin{corollary} \thlabel{res:implications-of-max-error}
For a differentiable non-decreasing continuous DR-submodular function $F(x)$, the following hold:
\begin{enumerate}
    \item For $x \in[\ell,u]\subseteq \mathbb{R}$, $\breve{F}(x;u) = \tilde{F}(x;u)$ and $\breve{F}(x;\ell) - \tilde{F}(x;\ell)$, i.e., the error is zero when $\hat{x}$ is at the extreme points of the set $[\ell,u]$.
    \item For $\hat{x} \in[\ell,u]\subseteq \mathbb{R}$, $\breve{F}(u;\hat{x}) = \tilde{F}(u;\hat{x})$ and $\breve{F}(\ell;\hat{x}) - \tilde{F}(\ell;\hat{x})$, i.e., the error is zero when $x$ is at the extreme points of the set $[\ell,u]$.
\end{enumerate}

\end{corollary}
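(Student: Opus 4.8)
The plan is to read off all four equalities directly from the single-cut error bound \eqref{eqn:error-bound1}, whose right-hand side is separable across components. It suffices to analyze one generic term
\[
\frac{\partial F(\hat{x})}{\partial x_i}\left(\frac{u_i-\hat{x}_i}{u_i-\ell_i}(x_i-\ell_i) - [x_i-\hat{x}_i]^+\right),
\]
and to show that the bracketed factor vanishes at each of the four boundary configurations; summing over $i$ then gives that the whole upper bound in \eqref{eqn:error-bound1} is zero. Since $\breve{F}(x;\hat{x})$ is the concave envelope of $\tilde{F}(x;\hat{x})$ it satisfies $\breve{F}(x;\hat{x})\geq \tilde{F}(x;\hat{x})$, so an upper bound of zero together with this nonnegativity forces the error to be exactly zero.

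For the first statement (extreme values of $\hat{x}$) I would proceed by two cases. When $\hat{x}_i=u_i$ the slope $\tfrac{u_i-\hat{x}_i}{u_i-\ell_i}$ equals $0$, so the affine term vanishes, and since $x_i\leq u_i$ we have $[x_i-u_i]^+=0$ as well; both pieces are zero. When $\hat{x}_i=\ell_i$ the slope equals $1$, so the affine term reduces to $x_i-\ell_i$, while $x_i\geq \ell_i$ gives $[x_i-\ell_i]^+=x_i-\ell_i$; the two pieces coincide. In either case the bracket is zero, yielding $\breve{F}(x;u)=\tilde{F}(x;u)$ and $\breve{F}(x;\ell)=\tilde{F}(x;\ell)$.

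The second statement (extreme values of $x$) is symmetric. When $x_i=u_i$ the affine term is $\tfrac{u_i-\hat{x}_i}{u_i-\ell_i}(u_i-\ell_i)=u_i-\hat{x}_i$, and since $\hat{x}_i\leq u_i$ we have $[u_i-\hat{x}_i]^+=u_i-\hat{x}_i$, so the two pieces agree. When $x_i=\ell_i$ the affine term is $0$ because $x_i-\ell_i=0$, and $\hat{x}_i\geq\ell_i$ makes $[\ell_i-\hat{x}_i]^+=0$; both pieces vanish. Hence $\breve{F}(u;\hat{x})=\tilde{F}(u;\hat{x})$ and $\breve{F}(\ell;\hat{x})=\tilde{F}(\ell;\hat{x})$.

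There is essentially no real obstacle here: the only thing to watch is the sign of the argument of the ReLU at each boundary, which is resolved entirely by the interval memberships $\ell_i\leq x_i\leq u_i$ and $\ell_i\leq \hat{x}_i\leq u_i$; DR-submodularity and monotonicity play no role beyond guaranteeing that the estimator and its envelope are well-defined. I would also flag that the statement as typeset appears to contain typos: the clauses written ``$\breve{F}(x;\ell) - \tilde{F}(x;\ell)$'' and ``$\breve{F}(\ell;\hat{x}) - \tilde{F}(\ell;\hat{x})$'' should presumably read ``$\breve{F}(x;\ell)=\tilde{F}(x;\ell)$'' and ``$\breve{F}(\ell;\hat{x})=\tilde{F}(\ell;\hat{x})$'' (equivalently, that these differences equal zero), matching the accompanying verbal claim that the error is zero.
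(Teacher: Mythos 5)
Your proof is correct, and it follows exactly the route the paper intends: the authors state this corollary explicitly \emph{without proof} as an implication of \eqref{eqn:error-bound1}, and your componentwise case analysis of the separable bracket $\frac{u_i-\hat{x}_i}{u_i-\ell_i}(x_i-\ell_i)-[x_i-\hat{x}_i]^+$ at the four boundary configurations ($\hat{x}_i\in\{\ell_i,u_i\}$ and $x_i\in\{\ell_i,u_i\}$), together with your observation that the ``$-$'' signs in the statement are typos for ``$=$'', supplies precisely the missing argument. One small caveat: your closing remark that monotonicity plays no role understates things slightly---your sandwich step invokes $\breve{F}(x;\hat{x})\geq\tilde{F}(x;\hat{x})$, which requires $\nabla F(\hat{x})\geq 0$ (i.e., monotonicity) once the chord-minus-ReLU brackets are scaled by the partial derivatives---although this can be sidestepped entirely by noting that, given the definitions \eqref{eqn:gradient-first-order-estimator} and \eqref{eqn:approx-first-order-estimator}, the bound \eqref{eqn:error-bound1} actually holds with equality, so each vanishing bracket makes the difference exactly zero with no sign argument needed.
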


Now we proceed to analyzing the error of the convex hull hypograph, i.e., the minimum of cuts. Let $\breve{F}(x) = \min_{\hat{x}\in \mathcal{X}} \breve{F}(x;\hat{x})$ denote the overestimator obtained by the minimum of approximate cutting planes. While it is clear that $\tilde{F}(x) = F(x)$, there may be a gap between $\breve{F}(x)$ and $F(x)$ for an arbitrary $\hat{x}$, as Figure \ref{fig:convex-hull-hypo} illustrates.

\begin{figure}
	\centering
	\includegraphics[width=0.55\linewidth]{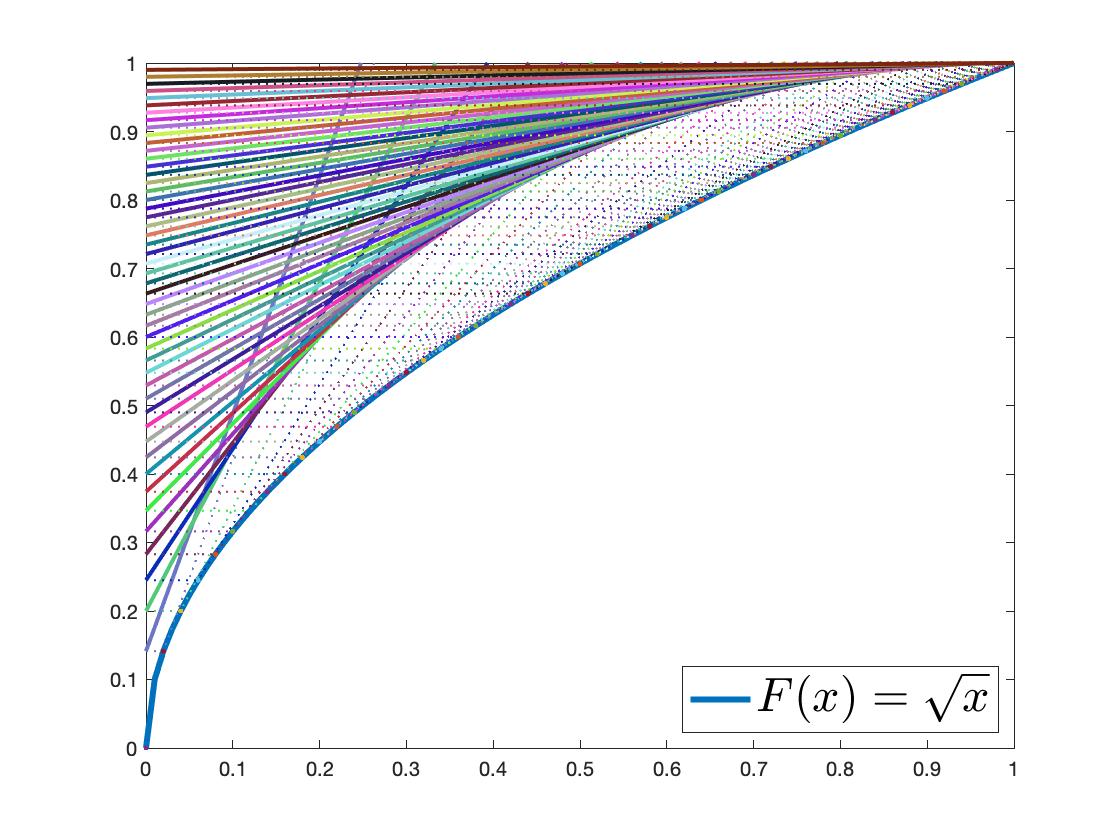}
	\caption{Approximate concave envelope of hypograph of $F(x)=\sqrt{x}$. The curves above the plot of $F(x)=\sqrt{x}$ are concave envelopes of individual cuts, i.e., \eqref{eqn:relu-envelope}.}
	\label{fig:convex-hull-hypo}
\end{figure}

However, Figure \ref{fig:convex-hull-hypo} also indicates that $\breve{F}(x)=F(x)$ at the upper and lower limits of the domain of $F$. The following result, which follows from part 2 of \thref{res:implications-of-max-error}, proves this.

\begin{corollary} \thlabel{res:tight-approx-at-boundary}
For a differentiable non-decreasing continuous DR-submodular function $F$, $F(\ell) = \breve{F}(\ell)$ and $F(u) = \breve{F}(u)$.
\end{corollary}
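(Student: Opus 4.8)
The plan is to reduce the claim to the perfect-estimator identity \eqref{eqn:perfect-estimator-at-xHat} together with the boundary behaviour recorded in part~2 of \thref{res:implications-of-max-error}. The key observation is that the two families of cuts $\breve{F}(\cdot\,;\hat{x})$ and $\tilde{F}(\cdot\,;\hat{x})$ agree at the two corners $\ell$ and $u$ of the box for \emph{every} support point $\hat{x}\in\mathcal{X}$, and that pointwise equality of two families is preserved when one takes the pointwise minimum over $\hat{x}$. First I would unfold the definitions: $\breve{F}(\ell)=\min_{\hat{x}\in\mathcal{X}}\breve{F}(\ell\,;\hat{x})$ and $\breve{F}(u)=\min_{\hat{x}\in\mathcal{X}}\breve{F}(u\,;\hat{x})$, while $\tilde{F}(\ell)=\min_{\hat{x}\in\mathcal{X}}\tilde{F}(\ell\,;\hat{x})=F(\ell)$ and $\tilde{F}(u)=F(u)$ by \eqref{eqn:perfect-estimator-at-xHat}. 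Hence it is enough to prove the two families coincide at $\ell$ and at $u$.

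Next I would verify these boundary identities directly from the formulas \eqref{eqn:gradient-first-order-estimator} and \eqref{eqn:approx-first-order-estimator}. At $x=\ell$ the envelope term $\bigl(\nabla F(\hat{x})\odot\tfrac{u-\hat{x}}{u-\ell}\bigr)^{T}(x-\ell)$ vanishes, so $\breve{F}(\ell\,;\hat{x})=F(\hat{x})$; likewise $[\ell-\hat{x}]^{+}=0$ because $\hat{x}\geq\ell$, so $\tilde{F}(\ell\,;\hat{x})=F(\hat{x})$ as well. At $x=u$ the coordinatewise product $\tfrac{u-\hat{x}}{u-\ell}\odot(u-\ell)$ collapses to $u-\hat{x}$, and since $\hat{x}\leq u$ we have $[u-\hat{x}]^{+}=u-\hat{x}$; therefore both $\breve{F}(u\,;\hat{x})$ and $\tilde{F}(u\,;\hat{x})$ equal $F(\hat{x})+\nabla F(\hat{x})^{T}(u-\hat{x})$. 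Taking $\min_{\hat{x}\in\mathcal{X}}$ on both sides of each identity then gives $\breve{F}(\ell)=\tilde{F}(\ell)=F(\ell)$ and $\breve{F}(u)=\tilde{F}(u)=F(u)$, which is the claim.

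The statement is essentially immediate once part~2 of \thref{res:implications-of-max-error} is in hand, so there is no genuine obstacle; the only point deserving attention is that that corollary is phrased in one dimension, whereas here $[\ell,u]\subseteq\mathbb{R}^{n}$. I expect this to cost nothing, since both estimators are separable across coordinates and the corner-evaluation argument above applies componentwise, so the univariate boundary identities lift directly to the box. The second mild point is the interchange of the equality with the minimum, which is automatic because $\breve{F}(\cdot\,;\hat{x})=\tilde{F}(\cdot\,;\hat{x})$ holds as an exact identity at $\ell$ and $u$ for each fixed $\hat{x}$ rather than merely after minimizing.
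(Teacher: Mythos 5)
Your proof is correct and takes essentially the same route as the paper's, which likewise invokes part~2 of \thref{res:implications-of-max-error} to get $\breve{F}(\ell)=\tilde{F}(\ell)$ and $\breve{F}(u)=\tilde{F}(u)$ and then concludes via \eqref{eqn:perfect-estimator-at-xHat}. You in fact go slightly beyond the paper by verifying the per-cut boundary identities directly from \eqref{eqn:gradient-first-order-estimator} and \eqref{eqn:approx-first-order-estimator} and by lifting them coordinatewise to the box $[\ell,u]\subseteq\mathbb{R}^{n}$ --- a worthwhile addition, since the paper states \thref{res:implications-of-max-error} only for the one-dimensional case and without proof.
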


\begin{proof} 
By part 2 of \thref{res:implications-of-max-error} we have that $\breve{F}(u)=\tilde{F}(u)$ and $\breve{F}(\ell)=\tilde{F}(\ell)$. Then, we use \eqref{eqn:perfect-estimator-at-xHat} to obtain the result.
\end{proof}

And the following result shows that the approximation error of $\breve{F}$ shrinks as the rectangle it is defined over shrinks.

\begin{proposition} \thlabel{res:error-gap-shrinks}
Let $\breve{F}(x;\ell,u)=\min_{\hat{x}\in \mathcal{X}} \breve{F}(x;\hat{x},\ell,u)$, where $\breve{F}(x;\hat{x},\ell,u)$ is an approximate overestimator of $F$ for support $\hat{x}$ over the domain $[\ell,u]$, be an overestimator of $F$ within the rectangle defined by upper bounds $u$ and lower bounds $\ell$. As $||u-\ell||_1\rightarrow 0$, $\breve{F}(x;\ell,u) - F(y) \rightarrow 0$. 
\end{proposition}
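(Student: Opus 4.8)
The plan is to sandwich the pointwise error $\breve{F}(x;\ell,u) - F(x)$ between $0$ and a quantity that is $O(\|u-\ell\|_1)$, with a bound that does not depend on $x$, so that convergence is in fact uniform over the rectangle. The lower bound $\breve{F}(x;\ell,u) \geq F(x)$ is immediate: each concave cut $\breve{F}(x;\hat{x},\ell,u)$ dominates the gradient-based first-order overestimator $\tilde{F}(x;\hat{x})$, which by \thref{res:gradient-based-first-order} dominates $F(x)$, and taking the minimum over $\hat{x}$ preserves the inequality. For the upper bound, the key observation is that the minimizing support point in $\breve{F}(x;\ell,u)=\min_{\hat{x}}\breve{F}(x;\hat{x},\ell,u)$ can only lower the value, so in particular $\breve{F}(x;\ell,u)\leq \breve{F}(x;x,\ell,u)$; that is, I evaluate the single cut at its own support point $\hat{x}=x$.

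Next I would invoke the single-cut error bound \eqref{eqn:error-bound2}, which holds for every $\hat{x}$. Setting $\hat{x}=x$ and using the perfect-estimator identity $\tilde{F}(x;x)=F(x)$ from \eqref{eqn:perfect-estimator-at-xHat}, so that $\breve{F}(x;x)-F(x)=\breve{F}(x;x)-\tilde{F}(x;x)$, yields
\[
0 \;\le\; \breve{F}(x;\ell,u) - F(x) \;\le\; \breve{F}(x;x,\ell,u) - \tilde{F}(x;x) \;\le\; \nabla F(x)^{T}\!\left(\frac{u-x}{u-\ell}\odot(x-\ell)\right).
\]
It then remains to show the right-hand side vanishes. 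I would bound each component using $\ell_i \leq x_i \leq u_i$, which gives
\[
\frac{u_i-x_i}{u_i-\ell_i}(x_i-\ell_i) \;\le\; u_i-\ell_i \qquad\left(\text{in fact } \le \tfrac{1}{4}(u_i-\ell_i)\right).
\]
Since $F$ is non-decreasing, each $\partial F(x)/\partial x_i \geq 0$, and since $\nabla F$ is continuous on the compact set $\mathcal{X}$ it is bounded there by some $M$. Combining, $\breve{F}(x;\ell,u)-F(x) \le M\sum_{i=1}^{n}(u_i-\ell_i) = M\,\|u-\ell\|_1 \to 0$. Because this estimate is independent of $x$, the convergence is uniform on $[\ell,u]$, and the stated limit follows; reading $F(y)$ as the value at any point $y$ of the same shrinking rectangle, $|F(x)-F(y)|\to 0$ as well by continuity, since $\|u-\ell\|_1\to 0$ forces $x$ and $y$ together.

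The main obstacle is justifying that the gradient bound $M$ is genuinely uniform. Under differentiability alone $\nabla F$ need not be bounded, so I would either strengthen the hypothesis to continuous differentiability (giving boundedness of $\nabla F$ on compact $\mathcal{X}$ automatically) or, more carefully, note that only the values of $\nabla F$ on the shrinking rectangle $[\ell,u]$ enter the bound, and these remain bounded once the rectangle is contained in a fixed neighborhood of its limit point. A secondary, purely cosmetic point is the apparent typo $F(y)$ versus $F(x)$; the cleanest formulation is $\sup_{x\in[\ell,u]}\bigl(\breve{F}(x;\ell,u)-F(x)\bigr)\to 0$, from which the $y$-version is an immediate corollary of the uniform continuity of $F$ on $\mathcal{X}$.
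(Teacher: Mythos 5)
Your proof is correct, and it reaches the conclusion by a mechanism genuinely different from the paper's. The paper first bounds the single-cut error via \eqref{eqn:error-bound2} by $\nabla F(\hat{x})^{T}(u-\ell)$, then passes from single cuts to the minimum of cuts through the comparison $\breve{F}(x;\ell,u)-\tilde{F}(x)\le\max_{\hat{x}}\bigl(\breve{F}(x;\hat{x},\ell,u)-\tilde{F}(x;\hat{x})\bigr)$ (the $\min f-\min g\le\sup(f-g)$ device), and finally invokes $\tilde{F}(x)=F(x)$ from \eqref{eqn:perfect-estimator-at-xHat}. You instead sandwich: since $x\in[\ell,u]$ is itself an admissible support, the minimum over supports is at most the single cut with $\hat{x}=x$, whose deviation from $F(x)=\tilde{F}(x;x)$ is exactly $\nabla F(x)^{T}\bigl(\frac{u-x}{u-\ell}\odot(x-\ell)\bigr)$, so no comparison of two minima is needed at all. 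Your route buys three things the paper leaves implicit: first, the lower bound $\breve{F}(x;\ell,u)\ge F(x)$ is actually proved (chord-above-ReLU domination, nonnegative gradient weights, then \thref{res:gradient-based-first-order}) rather than taken for granted in the word ``overestimator''; second, the error bound $\tfrac{1}{4}M\|u-\ell\|_1$ is explicit, sharpened by AM--GM, and uniform in $x$, which is precisely the form a spatial branch-and-bound convergence argument needs (cf.\ \thref{res:approx-cp-gap-shrinks}); third, you identify and repair a genuine gap the paper glosses over --- differentiability alone does not bound $\nabla F$ on a compact set (functions of $x^{2}\sin(1/x^{2})$ type witness this), and the paper's bound $\nabla F(\hat{x})^{T}(u-\ell)$ needs uniformity over all supports $\hat{x}$, so strengthening to continuous differentiability (or a local boundedness argument on the shrinking rectangle) is required in either proof; the paper silently assumes it. Your reading of the statement's $F(y)$ as a typo for $F(x)$, with the $y$-version recovered by uniform continuity once $\|u-\ell\|_1\to 0$, is the only sensible interpretation and is handled correctly.
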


\begin{proof} 
We will first show that the error between an approximate cut an and a first-order estimator, $\breve{F}(x;\hat{x},\ell,u)-\tilde{F}(x;\hat{x})$ goes to zero. Because the right-hand side of \eqref{eqn:error-bound2} is bounded above by $\nabla F(\hat{x})^T(u - \ell)$, we have that $\lim_{||u-\ell||_1} \breve{F}(x;\hat{x},\ell,u)-\tilde{F}(x;\hat{x}) = 0$. Then, because the error $\breve{F}(x;\ell,u)-\tilde{F}(x)$ is bounded by the maximum error of a single cut, $\lim_{||u-\ell||_1} \breve{F}(x;\ell,u)-\tilde{F}(x) = 0$. Since $\tilde{F}(x)=F(x)$ (see \eqref{eqn:perfect-estimator-at-xHat}), the result holds.
\end{proof}

\section{Spatial branch-and-bound algorithm}\label{sec:spatial-bb}

In this section we use the approximate convex hull results from Section \ref{sec:approx-hypograph} within a spatial branch-and-bound (SBB) algorithm for finding a global maximizer of $F$. Our SBB approach uses a cutting plane algorithm to maximize over the approximate hypograph in order to compute upper and lower bounds for sub-rectangles.

\subsection{Approximate Cutting Plane Algorithm} \label{sec:approx-cutting-plane-alg}

Given the fact that $\breve{F}(x)$ forms an approximate convex hull of the hypograph of $F$, it is natural to use $\breve{F}(x;\hat{x})$ as cuts with a cutting-plane algorithm based on the following approximate-hypograph-based formulation of \eqref{opt-problem}.

The approximate cutting plane main problem after $Q$ iterations is:

\begin{subequations}
\begin{alignat}{3}
 & \max_{x\in X} & \eta & \\
 & \textrm{subject to} \quad & \eta & \leq  F(\hat{x}_{q}) + \left(\nabla F(\hat{x}_{q})\odot \frac{u-\hat{x}_{q}}{u - \ell} \right)^T (x-\ell) \quad && q = 1,  \dots,Q. \label{eqn:approx-cut}
\end{alignat}
\label{approx-main-prob}
\end{subequations}

Algorithm \ref{approx-cutting-plane-alg} shows pseudocode for the approximate cutting plane algorithm that optimizes over the approximate concave envelope of $F$ within a rectangular region $[\ell,u]$. A main difference between this algorithm and the exact version is the termination criteria. While the exact version terminates when the upper and lower bounds are sufficiently close, because there can exist a gap between $F(x)$ and $\breve{F}(x)$, such a termination criteria will not lead to convergence for an approximate cutting plane algorithm that add cuts of the form \eqref{eqn:approx-cut}. Hence, we used an alternate criteria that terminates when the same cut has been added more than once (see the {\sc CutsAreUnique} procedure on line \ref{alg-line:while-header} of Algorithm \ref{approx-cutting-plane-alg}).

\begin{algorithm}
	{\sc ApproximateCuttingPlane}($\ell, u$)
	\begin{algorithmic}[1]
		\STATE $q \gets 0$
		\WHILE{{\sc CutsAreUnique}($q$)} \label{alg-line:while-header}
		\STATE Solve main problem \eqref{approx-main-prob} to obtain solution $(\eta^*,x^*)$.
		\STATE $\hat{x}^q \gets x^*$
		\STATE Add a cut to the main problem of the form: $\eta \leq F(\hat{x}_{q}) + \left(\nabla F(\hat{x}_{q})\odot \frac{u-\hat{x}_{q}}{u - \ell} \right)^T (x-\ell)$. \label{alg:approx-add-cut-step}
		\STATE $LB_q \gets F(x^*), UB_q \gets \eta^*$.
		\STATE $q \gets q+1$.
		\ENDWHILE
		\STATE \RETURN LB, UB
	\end{algorithmic}
	\caption{Approximate cutting plane algorithm for submodular maximization.}
	\label{approx-cutting-plane-alg}
\end{algorithm}

The following implication of \thref{res:error-gap-shrinks} shows that the final upper and lower bounds computed by the approximate cutting plane algorithm for a rectangle get tighter as the volume of the rectangle approaches zero.

\begin{corollary} \thlabel{res:approx-cp-gap-shrinks}
Let $UB^*$ and $LB^*$ be the final upper and lower bounds computed by Algorithm \ref{approx-cutting-plane-alg} for the recentangle defined by $u$ and $\ell$. As $||u-\ell||_1\rightarrow 0$, $UB-LB \rightarrow 0$. 
\end{corollary}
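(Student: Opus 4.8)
The plan is to reduce the gap $UB^* - LB^*$ to a single-cut approximation error evaluated at the final iterate and then apply \thref{res:error-gap-shrinks}. First I would record the sandwich that the algorithm maintains. Each approximate cut $\breve{F}(\cdot\,;\hat{x})$ is a valid overestimator of $F$: it is the concave envelope of $\tilde{F}(\cdot\,;\hat{x})$, which by \thref{res:gradient-based-first-order} already overestimates $F$, so $\breve{F}(\cdot\,;\hat{x}) \geq \tilde{F}(\cdot\,;\hat{x}) \geq F$. Hence the point $(F(x^{\mathrm{opt}}), x^{\mathrm{opt}})$ at the true maximizer is feasible for the finite relaxation \eqref{approx-main-prob}, and the main problem's optimal value obeys $\eta^* \geq \max_{x\in X} F(x) \geq F(x^*) = LB^*$. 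Since $UB^* = \eta^*$, the quantity to control is exactly $\eta^* - F(x^*)$.

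Next I would exploit the termination rule. The loop halts when {\sc CutsAreUnique} fails, i.e., when the cut generated at the final solution $x^*$ duplicates one already in the main problem. Consequently the constraint $\eta \leq \breve{F}(x\,;x^*)$ is present in \eqref{approx-main-prob}, so $\eta^* \leq \breve{F}(x^*\,;x^*)$. Evaluating the approximate estimator \eqref{eqn:approx-first-order-estimator} at $x=\hat{x}=x^*$ and subtracting $F(x^*)$ gives
\[
\breve{F}(x^*\,;x^*) - F(x^*) = \nabla F(x^*)^{T}\!\left(\frac{u-x^*}{u-\ell}\odot (x^*-\ell)\right),
\]
which is precisely the maximal single-cut error bound \eqref{eqn:error-bound2}. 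Chaining the inequalities yields $UB^* - LB^* \leq \nabla F(x^*)^{T}\big(\tfrac{u-x^*}{u-\ell}\odot (x^*-\ell)\big)$.

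Finally I would pass to the limit exactly as in the proof of \thref{res:error-gap-shrinks}. Componentwise, $\tfrac{u_i-x_i^*}{u_i-\ell_i}(x_i^*-\ell_i) \leq u_i-\ell_i$, so the right-hand side is at most $\nabla F(x^*)^{T}(u-\ell)$; since $F$ is continuously differentiable on the compact set $\mathcal{X}$ its gradient is uniformly bounded, whence $\nabla F(x^*)^{T}(u-\ell)\to 0$ as $\|u-\ell\|_1\to 0$, forcing $UB^* - LB^* \to 0$.

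The hard part will be making the termination step fully rigorous: I must justify that the duplicated cut is the self-cut generated at $x^*$, so that the bound $\eta^* \leq \breve{F}(x^*\,;x^*)$ genuinely holds at termination regardless of how many cuts have accumulated. A secondary point is the uniform control of $\nabla F(x^*)$: because the optimizer $x^*$ varies with the rectangle, the limit must be taken uniformly in $x^*$, which is where differentiability on the compact domain $\mathcal{X}$ is needed and should be invoked explicitly.
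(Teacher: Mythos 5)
Your proof is correct and follows essentially the same route as the paper: termination by a duplicate cut reduces the gap $UB-LB$ to the single-cut error $\breve{F}(x^*;x^*)-F(x^*)$ from \eqref{eqn:error-bound2}, which is bounded by $\nabla F(x^*)^{T}(u-\ell)$ and sent to zero with the rectangle — in effect you have inlined the proof of \thref{res:error-gap-shrinks} rather than citing it. If anything your treatment is slightly more careful than the paper's: you need only $\eta^* \leq \breve{F}(x^*;x^*)$, which the duplicated self-cut directly guarantees, whereas the paper asserts $\eta^* = \breve{F}(\hat{x}^q;\ell,u)$ (identifying the finite cut collection with the envelope over \emph{all} supports), and your explicit appeal to a uniform gradient bound on the compact set $\mathcal{X}$ makes rigorous a limit the paper leaves implicit.
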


\begin{proof} 
Algorithm \ref{approx-cutting-plane-alg} terminates when a duplicate cut is added at iteration $q$, which implies that $\hat{x}^q$ is a maximizer of $\breve{F}(x;\ell,u)$. Thus, at termination we have $UB-LB=\eta^*-F(\hat{x}^q)=\breve{F}(\hat{x}^q;\ell,u)-F(\hat{x}^q)$. Applying \thref{res:error-gap-shrinks} yields the result.
\end{proof}

\subsection{Spatial branch-and-bound implementation}\label{sec:spatial-bb-implementation}

The approximate cutting plane algorithm provides a number of benefits that make it suitable to be used as a subroutine within a spatial branch-and-bound algorithm: 1) it provides upper and lower bounds for a sub-rectangle of the feasible region described by the set $X$ and 2) as \thref{res:approx-cp-gap-shrinks} shows the approximate cutting plane algorithm provides tighter bounds as the rectangles defined by $\ell$ and $u$ get smaller. Algorithm \ref{alg:spatial-bb} shows pseudocode for a spatial branch-and-bound algorithm.

\begin{algorithm}
\begin{algorithmic}[1]
	\STATE $Q' \gets Q_{init}$
	\STATE $BestLB, BestUB \gets$ {\sc ApproximateCuttingPlane}($Q'$)
	\STATE $\mathcal{Q} \gets \{Q'\}$ 
	\WHILE{$(BestUB - BestLB)/BestLB > \epsilon$}
	\STATE Remove some $Q'$ from $\mathcal{Q}$ \label{step:node selection}
	\STATE $Q_1, Q_2 \gets$ {\sc Partition}($Q'$) \label{step:partition}
	\FOR{$i=1,2$}
	\STATE $L(Q_i), U(Q_i) \gets$ {\sc ApproximateCuttingPlane}($Q_i$)
	\IF{$U(Q_i) < BestLB$}
	\STATE Add $Q_i$ to $\mathcal{Q}$
	\STATE $BestLB \gets \max\{BestLB, L(Q_i)\}$
	\ENDIF
	\ENDFOR
	\STATE $BestUB \gets \max\{BestUB, U(Q_1), U(Q_2)\}$
	\ENDWHILE
	\STATE \RETURN $BestLB, BestUB$
\end{algorithmic}
\caption{Spatial branch-and-bound algorithm}
\label{alg:spatial-bb}
\end{algorithm}

In the \textit{node selection} step (line \ref{step:partition}), the sub-rectangle $Q'$ with the largest inherited upper bound is chosen. In the {\sc Partition} step (line \ref{step:partition}) the variable with the largest gap between its upper and lower bounds is selected as $\hat{i}$. Next, the rectangle $Q'=[\ell',u']$ is split into two along the $\hat{i}$ axis: in new sub-rectangle $Q_1$ the variable $\hat{i}$ has lower bound $\ell'_i$ and upper bound $u'_i-\ell'_i/2$ and in $Q_2$ $\hat{i}$ has lower bound $u'_i-\ell'_i/2$ and upper bound $u'_i$. In our implementation of Algorithm \ref{alg:spatial-bb} the inequalities added by the approximate cutting plane algorithm when evaluating a node are inherited to child nodes.

\section{Results}\label{sec:results}
To measure the computational performance of our approach, we sought to test it against the best state-of-the-art method. However, although our spatial-branch-and-bound approach is designed for non-factorable problems with gradient information, there are not many existing solvers for this class of problems. Global optimization solvers are typically designed for one of two types of problems. The first type are \textit{white box} problems for which a complete algebraic representation of the objective function and constraints is known, i.e., the factorability assumption. The second type are \textit{black-box} problems for which no structure is known about the objective function and/or constraints. In addition, because it is assumed that the objective function is expensive to evaluate, methods for solving these problems typically do not use gradients or Hessians.

Thus, to test our method against the best competitor, we ran experiments on factorable problems and used the best available solver for these problems, the BARON commercial solver \citep{sahinidis:baron:21.1.13}.

We ran experiments on two types of factorable problems. The first type, problems that admit a \textit{deterministic formulation}, includes a DR-submodular quadratic function and a continuous version of the uncapacitated maximum covering location problem. The second type of problems require a \textit{scenario-based formulation} to represent an expectation. In these formulations the number objective functions terms is exponential in the number of decision variables. These problems include a continuous version of the capacitated maximum covering location problem and a continuous influence maximization problem.

All code was implemented in Python and run on an Apple MacBook Pro running a 2.9 Ghz 6-core Intel core i9 processor with 32 GB of memory. Linear programming subproblems were solved using Gurobi \citep{gurobi}.

For the spatial branch-and-bound (SBB) algorithm, we terminated the approximate cutting plane subroutine when the upper bound had not improved for 5 iterations, a proxy for duplicate cuts being added. We terminated the SBB algorithm when the the relative gap, computed as $(UB-LB)/LB$, reached 0.05 or less or when a one-hour time limit was exceeded. The approximate cutting plane subproblems were also terminated when the relative gap reached 0.001 or the solution had not improved in the last 5 iterations. For BARON we used a one-hour time limit and 0.05 relative optimality gap tolerance.

\subsection{Deterministic formulations}

\subsubsection{Non-decreasing non-concave quadratic submodular maximization}
To test our approach we generated synthetic constrained non-decreasing, non-concave quadratic submodular maximization problems of the form:

\begin{equation}
	\begin{array}{rrclcl}
		\displaystyle \max & F(x) = h^T x + \frac{1}{2}x^T H x + c\\
		\textrm{s.t.} & Ax \leq b & \\
		& \ell \leq x \leq u &
	\end{array}
	\label{eqns:quadratic}
\end{equation}

where $H$ is a symmetric $n\times n$ real matrix and $A$ is $m\times n$. To ensure the submodularity of $F$, we randomly generate the entries of $H$ from $[-1,0]$. In addition, we also generate the entries of $A$ from $[0,1]$. To ensure that the gradient is non-negative, we set $h=-H^Tu$. Without loss of generality, we set the constant term $c$ to zero. We also set $\ell$ to a vector of zeros, and set the vectors $u$ and $b$ to a vector of ones. 

Figure \ref{fig:quadraticPlots} shows the runtime achieved by BARON and the spatial branch-and-bound algorithm when solving instances of \eqref{eqns:quadratic}.

As the figure shows, both methods solve the problems quickly. However, BARON slightly outperforms SBB. The reason that BARON outperformed SBB is probably because the quadratic objective function in \eqref{eqns:covering-defense} consists of factorable bilinear and the number of bilinear terms is polynomial in the problem size. BARON effectively uses overestimators of the bilinear terms in \eqref{eqns:covering-defense} to compute upper bounds. On the other hand, SBB only uses function evaluations and gradient evaluations to compute upper bounds. Because the number of these terms is manageable BARON outperforms SBB. Thus, for these problems exploiting factorability is more useful than exploiting factorability.

\begin{figure}[H]
     \centering
     \begin{subfigure}[b]{\textwidth}
         \centering
         \includegraphics[scale=0.40]{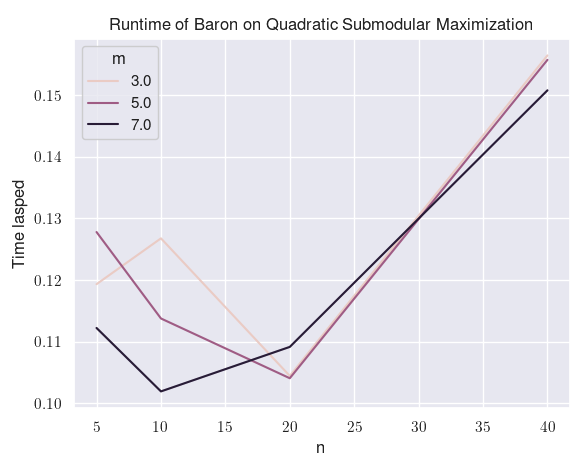}
         \caption{Runtime for BARON solver}
         \label{fig:p110}
     \end{subfigure}\\
         \begin{subfigure}[b]{\textwidth}
         \centering
         \includegraphics[scale=0.40]{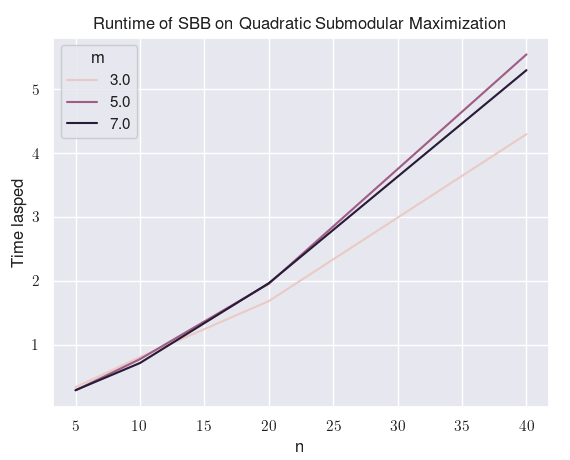}
         \caption{Runtime for SBB algorithm}
         \label{fig:p150}
     \end{subfigure}\\
    \caption{The plots shows the average runtime in seconds achieved by the Spatial Branch and Bound algorithm and BARON for non-decreasing non-concave quadratic submodular maximization. For both SBB and BARON we used the same two randomly-generated instances (consisting of $A$ and $H$) for each row/column combination and computed the average.}
    
    \label{fig:quadraticPlots}
\end{figure}

\subsubsection{Maximum Covering Facility Defense Problem} \label{subsec:fac-defense}
In this subsection we describe the results of experiments on the following maximum covering facility defense problem. Let $J$ be a set of demand points and $[n]$ a set of spatially-located facilities. Facility $i\in [n]$ covers a demand point $j\in J$ if the distance from $i$ to $j$, $d_{ij}$, is no more than some threshold $\bar{d}$. Let $J_i$ be the set of facilities that cover demand point $j$. Then the coverage provided by facilities located at the locations in set $S$ is $f(S) = \left|\bigcup_{i\in S} J_i \right|$; thus $f$ is submodular. Let $g_i$ be a concave function such that $g_i(x_i)$ is the probability that facility $i$ is available given that $x_i$ hardening resources were allocated to $i$. The availability of facilities is assumed to be independent. The feasible region is $X = \{x \,|\, 0\leq x \leq 1, 1^T x \leq b\}$, where $b$ is a budget for hardening resources. Letting $I_j$ be the set of facilities that cover $j$, the maximization problem is:

\begin{equation}
	\label{eqns:covering-defense}
	\begin{array}{rrclcl}
		\displaystyle \max_{x\in X} & F(x) = \mathbb{E}_{S\sim x} f(S) = \displaystyle\sum_{j\in J} w_j \left[ 1 - \prod_{i\in I_j} (1-g_i(x_i)) \right]  
	\end{array}
\end{equation}

Note that because $g$ is concave, $F$ is DR-submodular.

In our experiments we used a distance threshold value of $\bar{d} = 0.2$. We tested an instance of the problem that models the effect of resource allocation using a contest success function with the form $g_i(x_i) = \frac{x_i}{x_i + a_i}$, for some $a_i > 0$, which represents the disruption intensity that facility $i$ is exposed to. In our experiments we chose $a_i = b/|n|$. We tested instances with $|J|=150$ demand points.

Following this setup, Table \ref{fig:problemsSolvedUNCAPFacility} shows the runtime achieved by BARON and spatial branch-and-bound when solving the uncapacitated maximum covering facility defense problem. 

As the table shows, as the problem size increases, BARON solved the problems very quickly while SBB took much longer. The reason that BARON outperformed SBB is probably because the objective function in \eqref{eqns:covering-defense} is factorable and the number of nonlinear terms is linear in the problem size. BARON effectively uses overestimators of the product and quotient terms in \eqref{eqns:covering-defense} to compute upper bounds. On the other hand, SBB only uses function evaluations and gradient evaluations to compute upper bounds. Because the number of these terms increases with $|J|$ and the degree of each term increases with $|I|$, the number of terms is manageable and BARON outperforms SBB. Thus, for these problems exploiting factorability is more useful than exploiting factorability. However, as the results in the next section will show, when the number of nonlinear objective function terms is large, SBB outperforms BARON.
 
\begin{table}[H]
    \centering
    \captionsetup{margin=0.9cm}
    \caption{The average runtime (in seconds) over two randomly-generated instances and the average percent gap achieved by the Spatial Branch and Bound compared with BARON for the uncapacitated problem with different numbers of nodes and budget values for the case in which $g_i(x_i) = \frac{x_i}{x_i + a_i}$ with $a_i = b/|n|$.}
    \label{fig:problemsSolvedUNCAPFacility}
    
   \begin{tabularx}{0.6\linewidth}{@{}Y @{}Y @{}Y @{}Y}
   \hline
  \textbf{$\#$ Nodes ($n$)}  & \textbf{Budget ($b$)} & \textbf{SBB Runtime/Gap} & \textbf{BARON Runtime/Gap} \\ 
  \hline
   5 & 2 & 84.2 & 0.2 \\
   5 & 3 & 30.0 & 0.1 \\
   5 & 4 & 3.7 & 0.1 \\
    6 & 2 & 258.7 & 0.1  \\
   6 & 3 & 125.2 & 0.1  \\
   6 & 4 & 26.2 & 0.1  \\
   7 & 2 & 976.0 & 2.1  \\
   7 & 3 & 635.0 & 4.1  \\
   7 & 4 & 188.6 & 1.1  \\
      8 & 2 & 3457.8 & 0.2  \\
   8 & 3 & 2301.0 & 0.1  \\
   8 & 4 & 1034.8 & 0.1  \\
      9 & 2 & \textcolor{red!60}{6.7\%} & 0.2  \\
   9 & 3 & \textcolor{red!60}{6.1\%} & 0.1  \\
   9 & 4 & \textcolor{red!60}{5.1\%} & 0.1  \\
   \hline
\end{tabularx}
\end{table}

\subsection{Scenario-based Formulations} \label{sec:scenario-based}
\subsubsection{Capacitated Maximum Covering Facility Defense Problem} \label{subsec:cap-fac-defense}
This problem is similar to the uncapacitated version described in the previous section except that each facility $i$ has a capacity $K_i$, hence the coverage function $f(S)$ is computed by solving the following generalized assignment problem:

\begin{equation}
	\begin{array}{rrclcl}
		\displaystyle \max & \multicolumn{3}{l}{	\displaystyle\sum_{i\in S, j\in J_i} y_{ij}}\\
		\textrm{s.t.} & \sum_{j\in J} y_{ij} & \leq & K_i & & \textrm{for } i\in S,\\
		& y_{ij} & \leq & 1  & & \textrm{for } i\in S, j\in J_i,
	\end{array}
	\label{eqns:covering-defense-capacitated}
\end{equation}

where the variable $y_{ij}$ represents the assignment of demand point $j$ to facility $i$. Thus, $f$ is a submodular function. 

Because the facilities have capacities, the compact formulation used for the uncapacitated problem is not valid for the capacitated problem. Thus, we tested the following scenario-based formulation for the problem of allocating hardening resources $x_i$ in order to maximize the expected coverage:

\begin{equation}
	\label{eqns:covering-second-stage}
	\begin{array}{rrclcl}
		\displaystyle \max_{x\in X} & F(x) = \displaystyle\sum_{S\subseteq [n]} f(S) \prod_{i\in S} g_i(x_i) \prod_{i\notin S} (1 - g_i(x_i))
	\end{array}
\end{equation}

We used the same experimental setup as for the uncapacitated case (see \S\ref{subsec:cap-fac-defense}) along with facility capacities of $K_i =|J|/(1-\alpha)|n|$ where $\alpha=0.1$.

Table \ref{fig:problemcapacitatedFacility} shows the runtime achieved by BARON and spatial branch-and-bound when solving the capacitated maximum covering facility defense problem. 

As the table shows, these problems were challenging for both SBB and BARON. This is likely because the objective function \eqref{eqns:covering-second-stage} is a sum over the power set of $[n]$. However, as the table shows, SBB outperformed BARON. While the largest gap for SBB was $18.3\%$, BARON had an optimality gap up to $1997.7\%$ and could only solve the four problems to optimality. The reason that SBB outperformed BARON is probably because the number of nonlinear terms in the objective function \eqref{eqns:covering-second-stage} is exponential in the number of variables ($n$), causing the upper bounds computed by BARON, which are based on overestimators of the individual nonlinear terms, to be weak. Thus, for these problems exploiting submodularity is more useful than exploiting factorability.

\begin{table}[H]
    \centering
    \captionsetup{margin=0.9cm}
    \caption{The table shows the average runtime (over two randomly-generated instances) obtained by Spatial Branch and Bound and the average runtime (or average percent gap between the LB and UB when the runtime exceeds one hour) obtained by BARON for different number of nodes and budget values when solving the capacitated maximum covering facility problem. Text colored in red indicate the percent gap when runtime exceeds one hour.}
    \label{fig:problemcapacitatedFacility}
    
   \begin{tabularx}{0.6\linewidth}{@{}Y @{}Y @{}Y @{}Y}
   \hline
  \textbf{$\#$ Nodes ($n$)}  & \textbf{Budget ($b$)} & \textbf{SBB Runtime/Gap} & \textbf{BARON Runtime/Gap} \\ 
  \hline
   5 & 2 & 217.5 & 62.0 \\
   5 & 3 & 80.9 & 17.6 \\
   5 & 4 & 9.5 & 3.6 \\
    6 & 2 & 1463.9 & \textcolor{red!60}{47.9\%} \\
   6 & 3 & 701.5 & \textcolor{red!60}{1997.7\%} \\
   6 & 4 & 143.8 & 435.4 \\
   7 & 2 & \textcolor{red!60}{7.8\%} & \textcolor{red!60}{240.2\%} \\
   7 & 3 & \textcolor{red!60}{6.3\%} & \textcolor{red!60}{181.0\%} \\
   7 & 4 & 2168.5 & \textcolor{red!60}{133.3\%} \\
      8 & 2 & \textcolor{red!60}{13.6\%} & \textcolor{red!60}{529.6\%} \\
   8 & 3 & \textcolor{red!60}{10.8\%} & \textcolor{red!60}{427.6\%} \\
   8 & 4 & \textcolor{red!60}{8.3\%} & \textcolor{red!60}{325.2\%} \\
      9 & 2 & \textcolor{red!60}{18.3\%} & \textcolor{red!60}{1168.5\%} \\
   9 & 3 & \textcolor{red!60}{15.6\%} & \textcolor{red!60}{963.6\%} \\
   9 & 4 & \textcolor{red!60}{13.0\%} & \textcolor{red!60}{758.8\%} \\
   \hline
\end{tabularx}
\end{table}

\subsubsection{Continuous Influence Maximization Problem} \label{subsec:cont-influence-max}
In this problem we study a continuous influence maximization problem on a graph $G=(N,A)$, in which the goal is to allocate marketing resources amongst the nodes in $N$ in order to maximize the expected number of nodes that become influenced during the propagation of influence originating from a set of seed nodes. For each node $i$ let $g_{i}(x_i)$ be a concave function representing the likelihood that node $i$ is initially influenced given that $x_i$ resources are allocated to $i$. After an initial set of seed nodes $S$ become influenced, the influence is propagated through the graph. We used a live-arc representation of the graph in which $G$ represents a random graph in which arcs are randomly present. Let $G_\omega$ represent a realization of $G$, and let $N_{i\omega}$ be the set of nodes that are influenced by seed node $i$ in scenario $\omega$. Thus, the function $f_\omega(S) = |\bigcup_{i\in S} N_{i\omega}|$ computes the total number of nodes influenced by seed set $S$ in realization $\omega$.

\begin{equation}
	\label{eqns:continuous-influence-max}
	\begin{array}{rrclcl}
		\displaystyle \max_{x\in X} & F(x) = \displaystyle\sum_{\omega\in \Omega} \sum_{S\subseteq [n]} f_\omega(S) \prod_{i\in S} g_{i}(x) \prod_{i\notin S} (1 - g_{i}(x))
	\end{array}
\end{equation}

Again, we tested two different cases for the function $g_i$. First, we used the identity function $g_i(x_i) = x_i$, which makes $F$ the standard multilinear extension for a submodular function $f$. Second, we used a contest success function with the form $g_i(x_i) = \frac{x_i}{x_i + a_i}$, for some $a_i > 0$, which represents the disruption intensity that facility $i$ is exposed to. In our experiments we chose $a_i = b/|n|$. In our experiments the number of live-arc realizations was $|\Omega|= 5$. To simulate the spread of influence within the graph we used the independent cascade model \citep{Kempe2003}.

Table \ref{fig:problemIMDefender} shows the runtime achieved by BARON and spatial branch-and-bound when solving the continuous influence maximization problem. The table shows the case where $g_i(x_i) = \frac{x_i}{x_i + a_i}$ with $a_i = b/|n|$ and also shows the case where $g_i(x_i) = x_i$. Again, the problems were challenging for both methods, in part because of the objective function \eqref{eqns:continuous-influence-max} requires summing over the $|\Omega|$ live-arc realizations as well as over the power set of $[n]$. The spatial branch-and-bound algorithm outperformed BARON by achieving a maximum optimality gap of no more than $16.1\%$ on all problems tested. BARON reached an optimality gap of up to $1643.9\%$ and could only solve to optimality when there were no more than 6 nodes in the graph. The reason that SBB outperformed BARON could be because the number of multiplicative terms in \eqref{eqns:continuous-influence-max} is exponential in the number of nodes $n$. Thus, an overestimator based on the combined envelopes of individual multiplicative terms may be weak. On the other hand, SBB uses overestimators that are directly based on the function $F(x)$ rather than individual multiplicative terms. 

\begin{table}[H]
    \centering
    \captionsetup{margin=0.9cm}
    \caption{The table shows the average runtime (over two randomly-generated instances) obtained by Spatial Branch and Bound and the average runtime (or average percent gap between the LB and UB when the runtime exceeds one hour) obtained by BARON for different number of nodes and budget values when solving the continuous influence maximization problem. We show the case when $g_i(x_i) = \frac{x_i}{x_i + a_i}$ with $a_i = b/|n|$ (columns 3 and 4) and also the case when $g_i(x_i) =x_i$ (columns 5 and 6). Text colored in red indicate the percent gap returned after the runtime exceeds one hour. Each instance is a randomly-generated Erd\H{o}s-R\'enyi graph with $p=0.2$ and $2|N|+1$ arcs.}
    \label{fig:problemIMDefender}
    
   \begin{tabularx}{0.6\linewidth}{@{}Y @{}Y @{}Y @{}Y @{}Y @{}Y}
   \hline
  \multirow{2}*\textbf{$\#$ Nodes ($|N|=n$)}  & \multirow{2}*\textbf{Budget ($b$)} & \multicolumn{2}{c}{$g_i(x_i) = \frac{x_i}{x_i + a_i}$} & \multicolumn{2}{c}{$g_i(x_i) =x_i$} \\
  & & SBB Runtime/Gap & BARON Runtime/Gap & SBB Runtime/Gap & BARON Runtime/Gap \\ 
  \hline
   5 & 2 & 106.2 & 85.5 & 0.3 & 56.3  \\
   5 & 3 & 42.9 & 26.1 & 2.0 & 240.3 \\
   5 & 4 & 10.3 & 6.8 & 1.8 & 147.4 \\
    6 & 2 & 849.2 & \textcolor{red!60}{59.9\%} & 1.0 & 2326.8\\
   6 & 3 & 455.9 & 2862.5 & 22.6 & \textcolor{red!60}{97.3\%} \\
   6 & 4 & 146.5 & 520.5 & 12.4 & \textcolor{red!60}{95.0\%} \\
   7 & 2 & \textcolor{red!60}{6.4\%} & \textcolor{red!60}{244.8\%} & 39.0 & \textcolor{red!60}{144.0\%} \\
   7 & 3 & 3520.5 & \textcolor{red!60}{191.9\%} & 120.8 & \textcolor{red!60}{312.2\%} \\
   7 & 4 & 1345.5 & \textcolor{red!60}{142.4\%} & 126.5 & \textcolor{red!60}{433.5\%} \\
      8 & 2 & \textcolor{red!60}{11.4\%} & \textcolor{red!60}{547.2\%} & 1973.9 & \textcolor{red!60}{250.3\%} \\
   8 & 3 & \textcolor{red!60}{9.4\%} & \textcolor{red!60}{432.4\%} & 2288.9 & \textcolor{red!60}{520.6\%} \\
   8 & 4 & \textcolor{red!60}{7.2\%} & \textcolor{red!60}{341.2\%} & 470.0 & \textcolor{red!60}{791.3\%} \\
      9 & 2 & \textcolor{red!60}{16.1\%} & \textcolor{red!60}{1043.4\%} & 1964.3 & \textcolor{red!60}{504.7\%} \\
   9 & 3 & \textcolor{red!60}{13.7\%} & \textcolor{red!60}{881.1\%} & \textcolor{red!60}{10.2\%} & \textcolor{red!60}{1057.2\%} \\
   9 & 4 & \textcolor{red!60}{11.1\%} & \textcolor{red!60}{782.5\%} & \textcolor{red!60}{10.1\%} & \textcolor{red!60}{1643.9\%} \\
   \hline
\end{tabularx}
\end{table}

\section{Conclusion} \label{sec:conclusion}
This article presents a new method for finding globally maximal solutions to maximization problems with an objective function that is continuous, non-decreasing, and diminishing returns (DR) submodular. Our method, a spatial-branch-and-bound algorithm, differs from many literature in that it does not require the function to be factorable. Instead, our method only uses function evaluations and gradient evaluations to compute overestimators.

Specifically, our method computes upper bounds via a cutting plane algorithm that maximizes over an approximation of the convex hull of the hypograph of DR-submodular functions.

Although our spatial-branch-and-bound approach is not designed for factorable problems, to ensure the fairest comparison possible, we tested it against the state-of-the-art method for factorable problems, the BARON \citep{sahinidis:baron:21.1.13} commercial solver.

We ran experiments on two types of factorable problems: problems that admit a deterministic formulation and problems that require a scenario-based formulation. In general BARON performed best on the deterministic problems, while our spatial-branch-and-bound method performed best on the scenario-based problems.

The reason that BARON performed best on the deterministic problems is probably because the fact that the objective function is factorable allows uses overestimators of the product and quotient terms to compute upper bounds. As long as the number of such terms is not too large, as is the case for the deterministic problems, BARON's approach is superior to our method, which does not exploit factorability and only uses function evaluations and gradient evaluations. Thus, for these problems it is more useful to exploit the property of factorability than to exploit submodularity.

The reason that our method outperformed BARON on the scenario-based problems is probably because the number of nonlinear terms in the objective function of these problems is exponential in the number of decision variables, causing the upper bounds computed by BARON, which are based on overestimators of the individual nonlinear terms, to be weak. Thus, for these problems it is more useful to exploit the property of submodularity than to exploit factorability.

Regarding the limitations of our approach, although the spatial branch-and-bound algorithm outperformed BARON, it still required significant computation time, even for small problems. This prevented us from testing other types of problems such as D-optimal experimental design and re-weighting in machine learning because these problems often have a larger number of variables. Thus, there is a need for continued improvement in spatial branch-and-bound algorithms for maximizing continuous submodular functions.

Regarding other avenues of future work, in this study we considered the maximization of purely continuous submodular functions, characterized the hypographs of such functions, and developed cutting plane algorithms. However, it would be interesting to investigate the hypograph of mixed-integer continuous submodular functions that are discrete in some variables and continuous in others.


\bibliography{biblography.bib}




\end{document}